\newtheorem{theorem}{Theorem}[section]
\newtheorem{lemma}[theorem]{Lemma}
\newtheorem{definition}[theorem]{Definition}
\newtheorem{corollary}[theorem]{Corollary}
\newtheorem{conjecture}[theorem]{Conjecture}
\newtheorem{problem}[theorem]{Problem}
\title{The Montesinos-Nakanishi 3-move conjecture for links up to 20 crossings}
\author[Bakshi]{Rhea Palak Bakshi}
\address{Department of Mathematics, University of California, Santa Barbara, USA}
\email{rheapalak@math.ucsb.edu $|$ rheapalakbakshi@gmail.com}
\author[Burton]{Benjamin A. Burton}
\address{School of Mathematics and Physics,
The University of Queensland, Australia}
\email{bab@maths.uq.edu.au}
\author[Guo]{Huizheng Guo}
\address{Department of Mathematics, George Washington University}
\email{hguo30@gwmail.gwu.edu}
\author[Ibarra]{Dionne Ibarra}
\address{School of Mathematics, Monash University, VIC 3800, Australia.}
\email{dionne.ibarra@monash.edu}
\author[Montoya-Vega]{Gabriel Montoya-Vega} 
\address[]{Department of Mathematics, The Graduate Center CUNY, USA, and \newline \indent Department of Mathematics, University of Puerto Rico at R\'io Piedras, PR, USA} 
\email[]{gabrielmontoyavega@gmail.com} 
\author[Mukherjee]{Sujoy Mukherjee} 
\address[]{Department of Mathematics, University of Denver, USA} 
\email[]{sujoymukherjee.math@gmail.com | sujoy.mukherjee@du.edu} 
\author[Przytycki]{J\'ozef H. Przytycki}
\address[]{Department of Mathematics, George Washington University and University of Gda\'nsk}
\email[]{przytyck@gwu.edu}
\begin{document}

\subjclass[2020]{Primary: 57K10. Secondary: 57-08, 57-04.}
\keywords{$3$-move, braids, Burnside group, Montesinos-Nakanishi conjecture, Conway's basic polyhedra, unknotting operations}

\begin{abstract}
    Yasutaka Nakanishi formulated the following conjecture in 1981: every link is 3-move equivalent to a trivial link. While the conjecture was proved for several specific cases, it remained an open question for over twenty years. In 2002, Mieczys{\l}aw D{\c a}bkowski and the last author showed that it does not hold, in general. In this article, we prove the Montesinos-Nakanishi $3$-move conjecture for links with up to 19 crossings and, with the exception of six pairwise non-isotopic links including the Chen link and its mirror image, for links with 20 crossings. Our work completely classifies links up 20 crossings modulo $3$-moves. This work includes computational methods, including new code in Regina that generalises pre-existing knot functions to work with links.
    \end{abstract}
\maketitle
\tableofcontents

\section{Introduction}

In 1981, Yasutaka Nakanishi, who at the time was a graduate student at Kobe University, formulated the following intriguing conjecture: every link is 3-move equivalent to a trivial link. A $3$-move is a local move on a link diagram illustrated in Figure \ref{3move}. The conjecture is known as the Montesinos-Nakanishi conjecture\footnote{Montesinos used $3$ moves for a related but different conjecture related to 3-fold dihedral branch coverings of $S^3$ branched along links \cite{Mon}.} and it was proved in many special cases; for instance, Qi Chen proved it for $5$-braids, with one exception of 20 crossings \cite{Che}. In 2002, Mieczys{\l}aw D{\c a}bkowski and the last author, showed that the conjecture does not hold in general \cite{DP1, DP2, DP3}. Moreover, they observed that the Chen link is a counterexample to the conjecture using the novel concept of the Burnside group of a link, which they introduced in their paper. In this note, we outline the proof of the Montesinos-Nakanishi conjecture for links up to 19 crossings and show that it holds for links of 20 crossings, with the exception of six pairwise non-isotopic links including the Chen link and its mirror image. These links are, however, 3-move equivalent to the Chen link. The tools we use are Conway's basic polyhedra (classified up to 20 crossings), the Vogel-Traczyk algorithm, which changes link diagrams to their braid form, analysis of conjugacy classes of $5$-braids, Burnside groups of links, and most importantly, a computer generated analysis of diagrams up to 20 crossings. Our main result is stated in the following theorem. 

\begin{figure}[ht]
$$\vcenter{\hbox{\begin{overpic}[scale = 1]{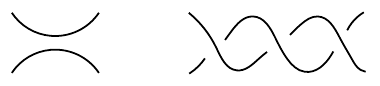}
	\put(52, 18){$\xleftrightarrow[]{3\text{-move }}$}
 \put(20, -5){$D_0$}
\put(130, -5){$D_3$}
\end{overpic} }} $$
\caption{The 3-move operation.}
\label{3move}
\end{figure}

\begin{theorem}[Main Theorem]\label{Main} \ 
\begin{enumerate}
\item[(1)]
The Montesinos-Nakanishi 3-move conjecture holds for links up to 19 crossings.
\item[(2)] A 20 crossings link is either 3-move equivalent to a trivial link or to the Chen link (Figure \ref{Chen5braidTrieste}) possibly with additional trivial components.
\item[(3)] There are six pairwise non-isotopic 20 crossing links without trivial components (all six have a 5-braid representative of 20 crossings) which are not 3-move equivalent to any trivial link; see Equations \ref{eqn1}-\ref{eqn6}. All of them are 3-move equivalent to the Chen link.
\end{enumerate}
\end{theorem}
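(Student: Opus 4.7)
The plan is to convert the theorem into a large but bounded finite computation. I would first enumerate every isotopy class of link with at most $20$ crossings, and then, for each representative, either exhibit a sequence of $3$-moves that trivializes it (or reduces it to the Chen link) or certify that no such sequence exists using the Burnside group of D{\c a}bkowski-Przytycki, which is known to be a $3$-move invariant.

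For the enumeration I would build on Conway's basic polyhedra, which have been classified up to $20$ crossings: every prime link diagram in this range is obtained from a basic polyhedron by inserting rational tangles at each vertex. Generating all such insertions and then deduplicating with classical polynomial invariants (Jones, HOMFLYPT, Kauffman) yields a usable list of representatives. The new Regina code advertised in the abstract extends the simplification primitives from knots to links and provides the infrastructure for storing, normalizing, and manipulating this list at scale.

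For each representative I would run a search that interleaves Reidemeister moves with $3$-moves, greedily lowering the crossing number. When a diagram refuses to simplify I would convert it to a braid via the Vogel-Traczyk algorithm; by Chen's earlier analysis any obstruction in this crossing range can be pushed onto at most five strands, so the hard case reduces to walking $3$-move orbits inside $B_5 / \langle\!\langle \sigma_i^3 \rangle\!\rangle$, whose conjugacy-class structure is already accessible in the literature. For genuinely obstructed candidates I would compute the Burnside group: if it coincides with the Burnside group of a trivial link, the greedy search is continued until a trivialization is found, and if it coincides with the Burnside group of the Chen link, a targeted search for a $3$-move sequence to the Chen link is run. Pairwise non-isotopy of the six residual $20$-crossing exceptions is then certified by comparing the polynomial invariants already computed during deduplication.

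The main obstacle is computational rather than conceptual: the number of link diagrams at $19$ and $20$ crossings is enormous, and the branching factor of a blind $3$-move search becomes unmanageable once the greedy crossing-reducing heuristic stalls, so a carefully designed move-ordering and caching strategy inside Regina will be essential. A more subtle concern is certifying completeness of the enumeration itself --- in particular, that no isotopy class at $20$ crossings escapes the Conway-polyhedron scheme combined with invariant-based deduplication. By contrast, the Burnside-group and $5$-braid analysis, while technically intricate, follows a well-established template once the list of hard cases has been pinned down to a manageable size.
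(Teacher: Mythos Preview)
Your endgame is essentially the paper's: once a finite list of candidates is in hand, push them to $5$-braids via Vogel--Traczyk, invoke Chen's classification of $B_5/(\sigma_i^3)$, and separate the Chen orbit from the trivial orbit with the Burnside group. The pairwise non-isotopy of the six exceptions is indeed handled by Jones polynomials, as you suggest.

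The gap is in the opening. You propose to enumerate all isotopy classes up to $20$ crossings by inserting rational tangles at the vertices of basic polyhedra, deduplicate with polynomial invariants, and then run a greedy $3$-move search on each representative. The paper explicitly rejects this: there are hundreds of millions of such links, and a blind $3$-move search on each is not salvageable by move-ordering or caching heuristics. Two observations make the problem tractable, and your plan is missing both. First, modulo $3$-moves one never needs rational tangles at the vertices of a basic polyhedron --- a single crossing at each vertex suffices --- so the objects to enumerate are $2^k$ decorations of each $k$-vertex polyhedron, not arbitrary algebraic insertions. Second, and this is the real content of the paper, a handful of local structural lemmas (the $2$-triangle rotation, the $3$-triangle reduction, and several $7$--$10$ vertex configurations) show that any polyhedron containing one of these subgraphs supports only $3$-move reducible links. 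Excising those polyhedra by subgraph search, together with discarding links having fewer than five Seifert circles (where Chen already applies), cuts the $75{,}000$-plus polyhedra down to eleven, and the supported links down to under $500$ --- at which point every survivor is already a $5$-braid of the right length and the endgame runs immediately. Without these reduction lemmas your plan never reaches a finite list small enough for the $5$-braid analysis to bite; the ``carefully designed move-ordering'' you allude to is not a substitute for them.
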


The diagram in Figure \ref{Chen5braidTrieste}(a) is the Chen link and Figure \ref{Chen5braidTrieste}(b) describes the basic Conway polyhedron which supports it.

\begin{figure}[H]
    \centering
    \begin{subfigure}{.46\textwidth}
    \centering
 \includegraphics[scale=.9]{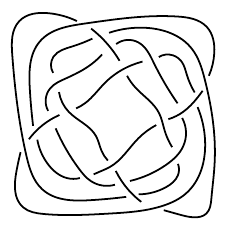}
        \caption{Chen's counterexample to the Montesinos-Nakanishi $3$-move conjecture.}
    \label{ChenLink}
    \end{subfigure}
    \quad
     \begin{subfigure}{.46\textwidth}
     \centering    
    \includegraphics[scale=.9]{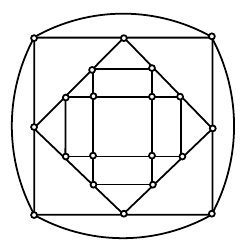}
        \caption{The basic Conway polyhedron supporting the Chen link.}
    \label{BasicConwayPoly}
    \end{subfigure}
    \caption{Chen's 20-crossing counterexample to the Montesinos-Nakanishi $3$-move conjecture, which is the closure of the five braid  $(\sigma_2\sigma_1^{-1}\sigma_2\sigma_3\sigma_4^{-1})^4$.}
    \label{Chen5braidTrieste}
\end{figure}
\section{Method}

Instead of checking the hundreds of millions of links up to 19 crossings \cite{Burton:Knots19}, we first exclude the classes of links for which the conjecture has already been proven. In particular, we exclude $3$-algebraic links \cite{P-Ts} and $5$-braids up to 19 crossings \cite{Cox,Che}. 
\subsection{Conway's basic polyhedra}
An important ingredient of the proof of our main result, Theorem \ref{Main}, is the presentation of link diagrams using basic Conway polyhedra. This method was first used by Kirkman \cite{Kirk, Kirk2} and Tait \cite{Tai}, and formalized by Conway \cite{Con} (see also \cite{JS}).

\begin{definition}\label{BPol} A $4$-regular $4$-edge connected, at least $2$-vertex connected plane graph is called a  
{\it basic Conway polyhedron} or basic polyhedron.
\end{definition}

The connectivity conditions above allow us to keep the number of basic polyhedra small, and they are not restrictive since any link with up to $k$ crossings is supported by some basic polyhedron on no more than $k$ vertices.
The number of basic polyhedra of 20 crossings is equal to $58,782$ and of $19$ crossings is $16,966$. 

\

To be able to analyze all links we should allow decorating vertices of polyhedra by arbitrary 2-algebraic (algebraic in the sense of Conway) 2-tangles. However, since we are considering links up to the 3-move operation, then it is enough to decorate every 4-vertex by a one diagram crossing, see \cite{P-2,P-5}. Notice that we still have $2^k$ possibilities for a $k$-vertex polyhedron.

\subsection{Reductions and rearrangements of Conway's polyhedra}

One method we use that significantly reduces the number of links we need to consider is operations on Conway polyhedra. Some of these operations reduce the number of crossings and some rearrange it while keeping the number of crossings fixed. We say a tangle with $n$ crossings is $3$-move reducible if there exists a sequence of $3$-moves and Reidemeister moves that changes the $n$ crossing tangle into an $m$ crossing tangle where $m<n$.

\

We start by showing that a configuration with two triangles illustrated in Figure~\ref{2triangles-1ab} can be reduced by using the $3$-move operation if the tangle supporting the configuration is non-alternating. Otherwise, the tangle is $3$-move equivalent to an alternating tangle obtained from rotating the $2$-triangles by $\frac{\pi}{3}$. 

\begin{figure}[H]
\centering
$$\vcenter{\hbox{\begin{overpic}[scale=.25]{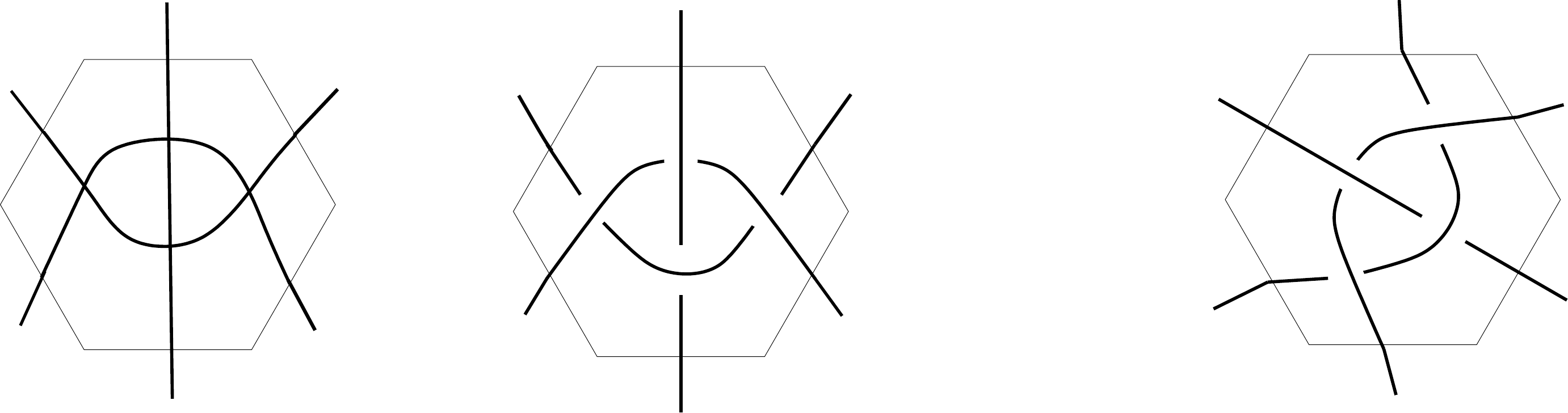}
\put(195, 40){$\xleftrightarrow[\text{ and isotopy }]{3\text{-moves}}$}
\put(0, 10){\textcolor{red}{1}}
\put(-2, 70){\textcolor{red}{2}}
\put(28, 85){\textcolor{red}{3}}
\put(107, 12){\textcolor{red}{1}}
\put(105, 69){\textcolor{red}{2}}
\put(137, 82){\textcolor{red}{3}}
\put(30, -10){(a)}
\put(210, -10){(b)}
\end{overpic}}}.$$
\caption{2-triangles configuration and $\frac{\pi}{3}$ positive (counter-clockwise) rotation.}
\label{2triangles-1ab}
\end{figure}

\begin{lemma}\label{redrot}
Consider the 2-triangle 3-tangle configuration of Figure \ref{2triangles-1ab}(a) which is a part of some Conway polyhedron. We draw it in the hexagon to allow better visualisation of a rotation.
\begin{enumerate}
\item[(1)] If the 3-tangle diagram supported by Figure \ref{2triangles-1ab}(a) is not alternating, then it can be reduced by the 3-move operation and isotopy to a diagram of fewer crossings.
\item[(2)] If the diagram supported by Figure \ref{2triangles-1ab}(a) is alternating, then the tangle can be rotated by $\frac{\pi}{3}$ radians using the 3-move operation and isotopy, see Figure \ref{2triangles-1ab}(b).
\end{enumerate}
\end{lemma}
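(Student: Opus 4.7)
The plan is to perform a case analysis on the sign pattern of the crossings within the 2-triangle configuration, exploiting the rotational symmetry of the hexagonal visualization and the fact that (as noted in the preceding paragraph) it suffices to decorate each vertex of the polyhedron by a single crossing of either sign. Up to the dihedral symmetry of the hexagon and mirror reflection, the $2^n$ possible decorations of the $n$ vertices involved collapse to a short list of essentially distinct patterns, and this reduced list will be the organizing framework for the proof.

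For part (1), the non-alternating hypothesis will guarantee that two adjacent crossings within one of the triangular faces carry the same sign. I would apply a $3$-move at such a crossing; the three new crossings produced, combined with the equally-signed neighbor, yield a pair of crossings of opposite signs running between parallel strands, which is a Reidemeister II configuration and can be cancelled. After this cancellation the total crossing count drops, proving the reduction. Every non-alternating sign pattern should be handled by the same local recipe after applying the symmetry reduction; the bookkeeping consists of verifying, case by case, that the Reidemeister II window actually opens up once the $3$-move has been performed.

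For part (2), with the diagram now alternating, the crossings in the two triangles are forced into a coordinated pattern. I would exhibit an explicit sequence of $3$-moves, applied cyclically at the crossings of one of the two triangles, whose composite is isotopic to the $\pi/3$ rotation of the original tangle. One way to see this is to apply a $3$-move at each of the three crossings of one triangle in turn and observe that the resulting diagram is a planar isotopic copy of the configuration with the two triangles interchanged, which is exactly the desired rotation. An alternative, cleaner, reformulation is to view both the original and rotated tangles as decorations of a common $4$-regular subgraph and check directly that the associated tangles coincide modulo $3$-moves.

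The main obstacle is the explicit verification of part (2): tracking how the sequence of $3$-moves threads through the hexagonal picture and confirming at each intermediate stage that the diagram can be planar-isotoped into the form required for the next step. This is best executed by careful diagrammatic computation, frame by frame, for which the $\mathbb{Z}/6$ symmetry of the hexagonal picture is the essential simplifying tool, since it reduces the verification to only one orbit representative at each stage.
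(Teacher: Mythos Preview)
Your plan for part~(1) is broadly right but the stated local mechanism is not quite correct. In the 2-triangle 3-tangle, written as the 3-braid $\sigma_1^{\epsilon_1}\sigma_2^{\epsilon_2}\sigma_1^{\epsilon_3}\sigma_2^{\epsilon_4}$, consecutive crossings lie on \emph{different} strand pairs, so a pair of ``adjacent same-sign crossings'' is not a $\sigma_i^{\pm 2}$ block and your proposed 3-move followed by $R_2$ does not apply directly. What actually works is that a non-alternating sign pattern allows a Reidemeister~3 move (the braid relation $\sigma_1^{\pm1}\sigma_2^{\pm1}\sigma_1^{\pm1}=\sigma_2^{\pm1}\sigma_1^{\pm1}\sigma_2^{\pm1}$) first, after which two crossings \emph{do} land on the same strand pair and can be reduced. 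This is easily repaired; in fact the paper's own proof does not treat part~(1) separately, tacitly regarding it as immediate.

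For part~(2) your proposal remains a plan rather than a proof: the heuristic ``apply 3-moves cyclically at the three crossings of one triangle and observe that the result is the rotated picture'' is not obviously correct, and you concede that the real work is the frame-by-frame diagrammatic tracking that you do not carry out. The paper's route here is genuinely different and far more efficient: it encodes the alternating 2-triangle tangle as the 3-braid word $\sigma_1^{-1}\sigma_2\sigma_1^{-1}\sigma_2$ and the $\pi/3$-rotated tangle as $\sigma_2^{-1}\sigma_1\sigma_2^{-1}\sigma_1$, and then exhibits an explicit one-line chain of braid-word equalities using only $R_3$ (braid relations) and 3-moves ($\sigma_i^{\pm1}\leftrightarrow\sigma_i^{\mp2}$). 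The algebraic encoding eliminates all of the planar-isotopy bookkeeping that you identify as the main obstacle, so the hexagonal $\mathbb{Z}/6$ symmetry you intend to exploit is never needed.
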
 
%{\color{red} In the proof we have to use Figure \ref{2triangles-red-rotation1}  from notes that is clockwise rotation by $\frac{\pi}{3}$} 
\begin{figure}[H]
\centering
$\vcenter{\hbox{\begin{overpic}[scale=.32]{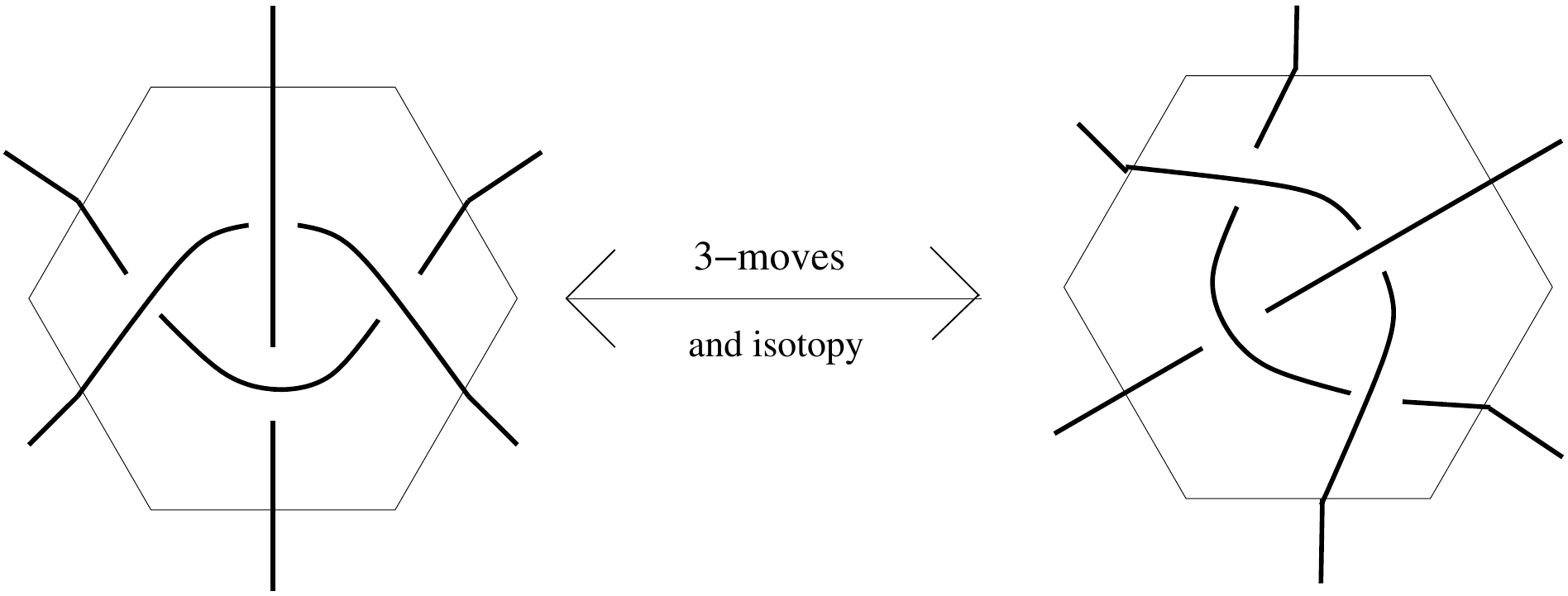}
\put(0,20){\textcolor{red}{$1$}}
\put(192,20){\textcolor{red}{$1$}}
\put(-5,85){\textcolor{red}{$2$}}
\put(195,88){\textcolor{red}{$2$}}
\put(44,110){\textcolor{red}{$3$}}
\put(235,110){\textcolor{red}{$3$}}
\end{overpic}}}$
\caption{Rotation by $\frac{\pi}{3}$ (clockwise direction).}
\label{2triangles-red-rotation1}
\end{figure}
\begin{proof} 
We provide a purely algebraic proof interpreting the first tangle of Figure \ref{2triangles-red-rotation1} as the 3-braid 
$\sigma_1^{-1}\sigma_2\sigma_1^{-1}\sigma_2$ and the second tangle as  
$\sigma_2^{-1}\sigma_1\sigma_2^{-1}\sigma_1$. 
In transforming the first to the second we will use three Reidemeister 3 ($R_3$) moves  and three $3$-move operations, as follows:
$$\sigma_1^{-1}(\sigma_2)\sigma_1^{-1}\sigma_2 \stackrel{3\text{-move}}{=}\sigma_1^{-1}\sigma_2^{-1}(\sigma_2^{-1}\sigma_1^{-1}\sigma_2) \stackrel{R_3}{=} (\sigma_1^{-1}\sigma_2^{-1}\sigma_1)\sigma_2^{-1}\sigma_1^{-1}\stackrel{R_3}{=}\sigma_2\sigma_1^{-1}(\sigma_2^{-1}\sigma_2^{-1})\sigma_1^{-1}\stackrel{3\text{-move}}{=}$$
$$(\sigma_2)\sigma_1^{-1}\sigma_2\sigma_1^{-1}\stackrel{3\text{-move}}{=} \sigma_2^{-1}(\sigma_2^{-1}\sigma_1^{-1}\sigma_2)\sigma_1^{-1}\stackrel{R_3}{=} \sigma_2^{-1}\sigma_1\sigma_2^{-1}(\sigma_1^{-1}\sigma_1^{-1})\stackrel{3\text{-move}}{=} \sigma_2^{-1}\sigma_1\sigma_2^{-1}\sigma_1.$$
\end{proof}

\begin{figure}[H]
\centering
 \begin{subfigure}{.31\textwidth}
       \centering
     \includegraphics[]{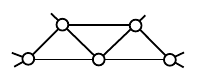}
\caption{3-triangles configuration.}
\label{reduction}
       \end{subfigure} \quad 
\begin{subfigure}{.3\textwidth}
       \centering
\includegraphics[]{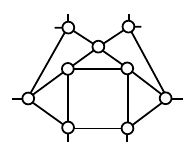}
\caption{4-tangle configuration.}
\label{configurationreducible4}
       \end{subfigure} \quad 
        \begin{subfigure}{.32\textwidth}
       \centering
       $\vcenter{\hbox{\begin{overpic}[]{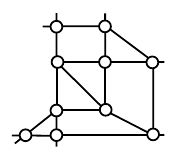}
\end{overpic}}}$
            \caption{4-tangle configuration.}
        \label{fig:Graphreduce9}
       \end{subfigure}
       \caption{Configurations that are 3-move reducible.}
       \label{fig:first3movereducible}
\end{figure}
\begin{figure}[H]
\centering
$\vcenter{\hbox{\begin{overpic}[]{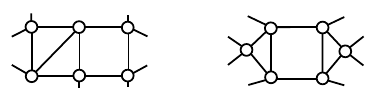}
\put(74, 15){$\xrightarrow[]{3\text{-moves}}$}
\end{overpic}}}.$
\caption{Relocating the square by Reidemeister 3 for alternating $2$-triangles.}
\label{2triangles-square2}
\end{figure}
\begin{figure}[H]
\centering
 \begin{subfigure}{.32\textwidth}
       \centering
$$\vcenter{\hbox{\begin{overpic}[]{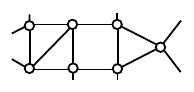}
\end{overpic}}}$$
            \caption{Graph configuration 1.}   \label{fig:Graphreduce0}
       \end{subfigure}
        \begin{subfigure}{.32\textwidth}
       \centering
$$\vcenter{\hbox{\begin{overpic}[]{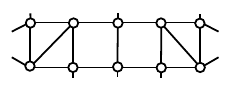}
\end{overpic}}}$$
            \caption{Graph configuration 2.}
        \label{fig:Graphreduce1}
       \end{subfigure}
        \begin{subfigure}{.32\textwidth}
       \centering
    $$\vcenter{\hbox{\begin{overpic}[]{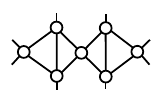}
\end{overpic}}}$$
            \caption{Graph configuration 3.}
        \label{configurationreducible5}
       \end{subfigure}
\caption{$3$-move reducible configurations.}
\label{reducible3move}
\end{figure}

From the lemma we obtain five useful properties.  Corollary \ref{Coro:reducingandmove}(1) is an operation on the tangle, Corollary \ref{Coro:reducingandmove}(2) and (4) are reductions, and Corollary \ref{Coro:reducingandmove}(3) is an operation on the supporting Conway polyhedra. 

\begin{corollary}\label{Coro:reducingandmove}\ 
\begin{enumerate}
\item[(1)] The alternating 2-triangle tangle is 3-move equivalent to its mirror image.
\item[(2)] Any link diagram containing a 3-triangle configuration, as illustrated in Figure \ref{reduction}, 
is 3-move equivalent to one with less crossings (without loss of generality we can assume that every circle has a single crossing).
\item[(3)] The alternating configuration of 2-triangles with the square added is 3-move equivalent to the configuration with the square between two triangles as in Figure \ref{2triangles-square2}.
\item[(4)] Any link diagram containing a tangle supported by a configuration illustrated in Figure \ref{reducible3move}, 
is 3-move equivalent to one with less crossings.
\end{enumerate}
\end{corollary}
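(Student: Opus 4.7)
All four statements are corollaries of Lemma \ref{redrot}, and I would organise the proof around its two conclusions: part (2), the $\pi/3$ rotation, is the main tool for (1) and (3), while the reduction in part (1) of the lemma is the engine for (2) and (4). Throughout, I would work in the braid realisation used in the proof of Lemma \ref{redrot}, so that each configuration is a concrete word in the $\sigma_i$'s, and so that $3$-moves correspond to the substitution $\sigma_i^{\pm 1} \leftrightarrow \sigma_i^{\mp 2}$.

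For (1), I would apply Lemma \ref{redrot}(2) once to replace $\sigma_1^{-1}\sigma_2\sigma_1^{-1}\sigma_2$ by $\sigma_2^{-1}\sigma_1\sigma_2^{-1}\sigma_1$. The mirror of the original word is $\sigma_1\sigma_2^{-1}\sigma_1\sigma_2^{-1}$, which differs from $\sigma_2^{-1}\sigma_1\sigma_2^{-1}\sigma_1$ only by a cyclic shift; since the $2$-triangle tangle sits inside a hexagonal patch whose six-fold rotational symmetry is an isotopy of the ambient polyhedron, this cyclic shift is realised geometrically, giving the mirror image. For (3), I would apply the same $\pi/3$ rotation and track the square attached to the $2$-triangle patch through the rotation; its image is then sitting between the two triangles as in Figure \ref{2triangles-square2}. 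Both (1) and (3) therefore reduce to very short bookkeeping arguments once Lemma \ref{redrot}(2) is in hand.

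For (2), I would look at the three pairs of triangles inside the $3$-triangle configuration of Figure \ref{reduction}. If at least one such pair is non-alternating, Lemma \ref{redrot}(1) reduces the crossing count immediately. Otherwise all three pairs are alternating, and I would argue that these constraints are over-determined: using part (1) of the corollary (or one application of Lemma \ref{redrot}(2)) to flip a local tangle to its mirror, I obtain a new $3$-triangle configuration in which at least one pair is forced to be non-alternating, and Lemma \ref{redrot}(1) again applies. For (4), I would handle each of the configurations in Figure \ref{reducible3move} individually: either a non-alternating $2$-triangle sub-configuration is already visible (apply Lemma \ref{redrot}(1) directly), or (3) can be used to slide an adjacent square into position so as to expose either a non-alternating $2$-triangle or a $3$-triangle configuration, after which (2) finishes the job.

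The main obstacle I anticipate is the fully-alternating sub-case of (2): verifying that the simultaneous alternating conditions on all three pairs of triangles genuinely force a reducible local configuration after a single mirror or rotation move. This will require a careful enumeration of the $2^{k}$ crossing choices on the $k$ vertices of the $3$-triangle patch, paralleling the style of computation already used in the proof of Lemma \ref{redrot}. The other bookkeeping steps (parts (1), (3), and the case analysis in (4)) should then be routine once this combinatorial point is pinned down.
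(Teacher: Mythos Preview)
Your plan for parts (2), (3) and (4) is essentially the paper's own argument. In particular, the paper handles the fully-alternating sub-case of (2) in exactly the way you sketch: once (1) is available, flip one $2$-triangle pair to its mirror and the resulting $3$-triangle tangle is non-alternating, so Lemma~\ref{redrot}(1) applies. No enumeration of the $2^k$ crossing choices is needed---the ``over-determined'' obstruction you anticipate dissolves after a single application of (1). Likewise, for (4) the paper does precisely what you propose: in each of the three configurations of Figure~\ref{reducible3move} it uses (3) (and once (1)) to slide the extra square so as to expose either a bigon or a $3$-triangle sub-configuration, and then reduces.

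There is, however, a genuine gap in your argument for (1). After one $\pi/3$ rotation you correctly obtain $\sigma_2^{-1}\sigma_1\sigma_2^{-1}\sigma_1$, and you correctly observe that the mirror of the original is $\sigma_1\sigma_2^{-1}\sigma_1\sigma_2^{-1}$. But these two are \emph{not} isotopic as $3$-tangles: a cyclic shift of a braid word is not an isotopy rel boundary, and the hexagonal patch has no ambient six-fold symmetry in a general Conway polyhedron (indeed, the whole point of Lemma~\ref{redrot}(2) is that the $\pi/3$ rotation is achieved only through $3$-moves, not isotopy). The paper's fix is simple and clean: apply Lemma~\ref{redrot}(2) three times, obtaining a $\pi$ rotation of the alternating $2$-triangle tangle via $3$-moves, and then observe geometrically that the $\pi$-rotated tangle \emph{is} the mirror image. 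So your use of Lemma~\ref{redrot}(2) is the right idea, but you need three applications, not one plus an unjustified isotopy.
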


\begin{proof} \ 
   \begin{enumerate}
       \item[(1)]  By applying a $\pi$ rotation on the alternating 2-triangle tangle we obtain its mirror image. 
       \item[(2)] We give an alternative proof than that of \cite{Che}. First notice that by Lemma \ref{redrot} any non-alternating tangle supported by the $3$-triangle configuration is $3$-move equivalent to a tangle with less crossings. Now consider an alternating tangle supported by the $3$-triangle configuration. By Corollary ~\ref{Coro:reducingandmove}(1), one choice of alternating $2$-triangle tangles is $3$-move equivalent to its mirror image. However this causes the $3$-triangle tangle to be non-alternating which implies that it is $3$-move equivalent to a tangle with less crossings. An example is illustrated below.
$$\vcenter{\hbox{\begin{overpic}[scale=1]{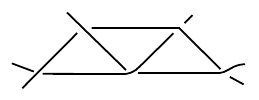}
\end{overpic}}} \xrightarrow[]{\text{Cor. \ref{Coro:reducingandmove}(1)}}
\vcenter{\hbox{\begin{overpic}[scale=1]{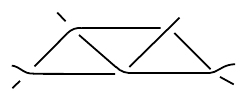}
\end{overpic}}}.$$
\item[(3)] Recall that the alternating tangles supported by the $2$-triangle configuration are $3$-move equivalent to each other so it suffices to consider one alternating case. By Lemma \ref{redrot}, the tangle configuration can be $3$-move changed to the following configuration: 
 $$\vcenter{\hbox{\begin{overpic}[scale=1]{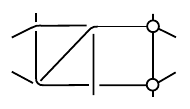}
\end{overpic}}} \xrightarrow[]{3\text{-moves}} \vcenter{\hbox{\begin{overpic}[scale=1]{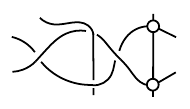}
\end{overpic}}}.$$

Consider a choice of crossing type on one vertex, by a Reidemeister 3 move we obtain a tangle configuration supported by the left configuration illustrated in Figure \ref{2triangles-square2}:

$$\vcenter{\hbox{\begin{overpic}[scale=1]{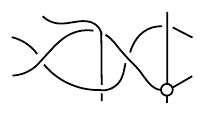}
\end{overpic}}} \sim \vcenter{\hbox{\begin{overpic}[scale=1]{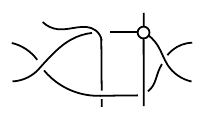}
\end{overpic}}}  .$$

Consider the second choice. After replacing the alternating $2$-triangle tangle by its mirror image and performing a Reidemeister 3 move we again obtain a tangle configuration supported by the left configuration illustrated in Figure \ref{2triangles-square2}:

$$\vcenter{\hbox{\begin{overpic}[scale=1]{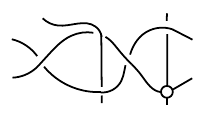}
\end{overpic}}} \xrightarrow[]{\text{Cor. \ref{Coro:reducingandmove}(1)}}  \vcenter{\hbox{\begin{overpic}[scale=1]{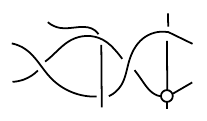}
\end{overpic}}}  \sim \vcenter{\hbox{\begin{overpic}[scale=1]{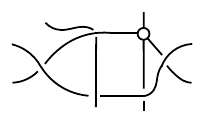}
\end{overpic}}}  .$$
\item[(4)] First note that if the tangle of the link supported by any of the configurations in Figure \ref{reducible3move} contains a non-alternating $2$-triangle tangle then the link is $3$-move equivalent to one with less crossings. We will assume that the $2$-triangle tangles are alternating.

\

For the $7$-vertex configuration illustrated in Figure \ref{reducible3move} left, after applying Corollary \ref{Coro:reducingandmove}(3) we obtain a $7$-vertex configuration with a bigon and bigons can be $3$-move reduced:

$$\vcenter{\hbox{\begin{overpic}[]{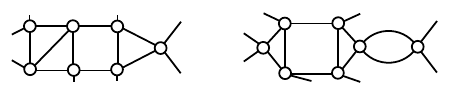}
\put(100, 20){$\rightarrow$}
\end{overpic}}}.$$
For the the $10$-vertex configuration illustrated in Figure \ref{reducible3move} middle after applying Corollary \ref{Coro:reducingandmove}(3), we obtain a $10$-vertex configuration that contains the $3$-triangle configuration illustrated in Figure \ref{reduction}:
$$\vcenter{\hbox{\begin{overpic}[scale=1]{Figures/ConfigReduce3.pdf}
\end{overpic}}} \qquad \rightarrow 
 \qquad \vcenter{\hbox{\begin{overpic}[scale=1]{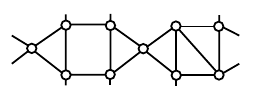}
\end{overpic}}}. $$

By Corollary \ref{Coro:reducingandmove}(1), this configuration is $3$-move reducible.

\

For the the $7$-vertex configuration illustrated in Figure \ref{reducible3move} right after applying Corollary \ref{Coro:reducingandmove}(1) then Corollary \ref{Coro:reducingandmove}(3), we obtain a $7$-vertex configuration that contains a bigon:
$$\vcenter{\hbox{\begin{overpic}[]{Figures/ReducedMoves6big.pdf}
\end{overpic}}} \rightarrow \vcenter{\hbox{\begin{overpic}[]{Figures/ConfigReduce2move2.pdf}
\put(89, 18){$\rightarrow$}
\end{overpic}}}.$$
   \end{enumerate}

\end{proof}

\begin{figure}[H]
\centering
 \begin{subfigure}{.16\textwidth}
       \centering
$$\vcenter{\hbox{\begin{overpic}[scale=1]{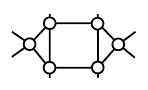}
\end{overpic}}}$$
            \caption{Config 1.}
        \label{fig:Graphmove1}
       \end{subfigure}
 \begin{subfigure}{.16\textwidth}
       \centering
       $$\vcenter{\hbox{\begin{overpic}[scale=1]{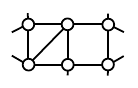}
\end{overpic}}}$$
            \caption{Config 2.}
        \label{fig:Graphmove0}
       \end{subfigure}
        \begin{subfigure}{.16\textwidth}
       \centering
       $$\scalebox{-1}[1]{\includegraphics[scale=1]{Figures/Graphmove0}}$$
            \caption{Config 3.}
        \label{fig:Graphmove2}
       \end{subfigure}
 \begin{subfigure}{.16\textwidth}
       \centering
       $$\vcenter{\hbox{\begin{overpic}[scale=1]{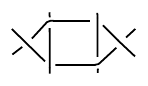}
\end{overpic}}}$$
            \caption{Tangle 1.}
        \label{fig:tangle1}
       \end{subfigure}
        \begin{subfigure}{.16\textwidth}
       \centering
       $$\scalebox{-1}[1]{\includegraphics[scale=1]{Figures/Graphmove2db}}$$
            \caption{Tangle 2.}
        \label{fig:tangle2}
       \end{subfigure}
        \begin{subfigure}{.16\textwidth}
       \centering
$$\vcenter{\hbox{\scalebox{1}[-1]{\includegraphics[scale=1]{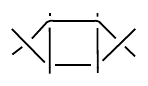}}}}$$
            \caption{Tangle 3.}
        \label{fig:tangle3}
       \end{subfigure}
\caption{Illustration of configurations and tangles related by the 3-move operation.}
\label{leftovertangles}
\end{figure}

\begin{lemma}\label{Lemma:congifmove2}
    The configuration in Figure \ref{fig:Graphmove1} either
    \begin{enumerate}
        \item supports a $6$-crossing tangle that can be reduced to a $5$-crossing tangle by a sequence of Reidemeister moves and $3$-moves, or
        \item supports a 6-crossing tangle that can be changed by a sequence of Reidemeister moves and $3$-moves to a 6-crossing tangle from the configuration illustrated in  Figure \ref{fig:Graphmove0}, or
         \item supports a 6-crossing tangle that can be changed by a sequence of Reidemeister moves and $3$-moves to a 6-crossing tangle from the configuration illustrated in Figure \ref{fig:Graphmove2}, or
         \item is $3$-move equivalent to Figure \ref{fig:tangle1}, \ref{fig:tangle2}, or \ref{fig:tangle3}. 
    \end{enumerate}
\end{lemma}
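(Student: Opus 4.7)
The plan is a case analysis over the crossing patterns that can occur on the six vertices of the configuration in Figure \ref{fig:Graphmove1}. The key observation is that within this configuration one can locate $2$-triangle sub-configurations, and on each such sub-configuration the tangle is either alternating or non-alternating. I will exploit this dichotomy together with the rearrangement and reduction machinery already established in Lemma \ref{redrot} and Corollary \ref{Coro:reducingandmove}, and funnel each crossing pattern into one of the four listed outcomes.

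First I would enumerate the $2$-triangle sub-configurations contained in Figure \ref{fig:Graphmove1}. If the restriction of the tangle to any such sub-configuration is non-alternating, then Lemma \ref{redrot}(1) produces a sequence of $3$-moves and Reidemeister moves that reduces the crossing count; this immediately yields conclusion $(1)$ of the lemma. So from this point on I may assume that the tangle restricts to an alternating tangle on every $2$-triangle sub-configuration of Figure \ref{fig:Graphmove1}.

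For the alternating cases I would apply the $\pi/3$-rotation of Lemma \ref{redrot}(2), the mirror-image equivalence of Corollary \ref{Coro:reducingandmove}(1), and the square-relocation of Corollary \ref{Coro:reducingandmove}(3) to transport crossings around the polyhedron. Each resulting configuration will fall into one of four mutually exhaustive possibilities: \emph{(a)} a bigon or a $3$-triangle configuration (as in Figure \ref{reduction}) appears, in which case Corollary \ref{Coro:reducingandmove}(2) gives a reduction and we are in case $(1)$; \emph{(b)} one of the configurations of Figure \ref{reducible3move} appears, in which case Corollary \ref{Coro:reducingandmove}(4) again gives case $(1)$; \emph{(c)} after a sequence of such moves the tangle sits on the configuration of Figure \ref{fig:Graphmove0} or on its mirror image in Figure \ref{fig:Graphmove2}, giving case $(2)$ or $(3)$; \emph{(d)} none of the preceding apply, and the residual alternating pattern must be compared by hand against the three tangles of Figures \ref{fig:tangle1}, \ref{fig:tangle2}, and \ref{fig:tangle3}, giving case $(4)$.

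The main obstacle is combinatorial bookkeeping. A priori there are $2^6=64$ crossing assignments on the six vertices of the configuration, and although the symmetries of the underlying polyhedron together with the equivalences from Corollary \ref{Coro:reducingandmove}(1) and the alternating-$2$-triangle rearrangements collapse these into a much smaller list of orbits, checking that every orbit terminates either in a reduction or in one of the three target configurations/tangles is delicate. In practice I would fix a representative of each orbit, apply the allowed moves in a disciplined order (first seek a non-alternating $2$-triangle, then attempt to create a bigon or $3$-triangle via Corollary \ref{Coro:reducingandmove}(3), then attempt to land on Config $2$ or Config $3$), and verify by direct inspection that the only orbits that survive this procedure correspond exactly to Tangles $1$--$3$ of Figure \ref{leftovertangles}.
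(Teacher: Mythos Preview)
Your overall strategy---case analysis on the $2$-triangle sub-configurations, using Lemma~\ref{redrot} to reduce non-alternating cases and to rotate alternating ones until the tangle lands on Config~2, Config~3, or one of three residual tangles---is exactly the paper's approach. The paper organises the $2^6$ crossing assignments by first fixing the signs at a subset of the vertices (producing eight partially-specified configurations, handled in batches of $4+2+2$) and then disposes of each batch with explicit pictures of the required isotopies, $3$-moves, and $2$-triangle rotations.

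Two remarks. First, your appeal to Corollary~\ref{Coro:reducingandmove}(4) is misplaced: the configurations of Figure~\ref{reducible3move} have seven or ten vertices and so cannot sit inside a six-vertex tangle; the paper's argument here uses only Lemma~\ref{redrot}, Reidemeister moves, and bare $3$-moves. Second, and more substantively, what you have written is a plan rather than a proof. The content of this lemma \emph{is} the bookkeeping you defer to ``direct inspection'': showing that the surviving orbits are precisely Tangles~1--3 (and not, say, four or two residual tangles) requires carrying out the explicit enumeration, which is what the paper actually does.
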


\begin{proof}
    Consider the following four configurations:
$$\vcenter{\hbox{\begin{overpic}[scale=1]{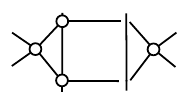}
\end{overpic}}} \quad 
\vcenter{\hbox{\scalebox{-1}[1]{\includegraphics[]{Figures/Graphmove2a1}}}} \quad \vcenter{\hbox{\begin{overpic}[scale=1]{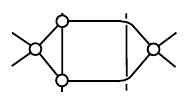}
\end{overpic}}}  \quad \vcenter{\hbox{\scalebox{-1}[1]{\includegraphics[]{Figures/Graphmove2a2}}}}.$$
Notice that each configuration is isotopic to a configuration with $2$-triangles and recall that the non-alternating tangles from the $2$-triangle configuration are $3$-move reducible. Furthermore, the two alternating tangles from each configuration are $3$-move equivalent. Therefore, from the four configurations we only need to consider the following four tangles:

$$\vcenter{\hbox{\scalebox{1}[1]{\includegraphics[]{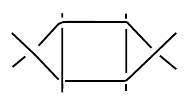}}}}  \quad 
\vcenter{\hbox{\begin{overpic}[scale=1]{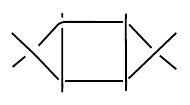}
\end{overpic}}} \quad 
\vcenter{\hbox{\scalebox{-1}[1]{\includegraphics[]{Figures/Graphmove2a3}}}} \quad \vcenter{\hbox{\scalebox{-1}[-1]{\includegraphics[]{Figures/Graphmove2a4}}}}.$$
By Lemma \ref{redrot}, each tangle can be $3$-move changed to a tangle obtained from Figure \ref{fig:Graphmove0} or \ref{fig:Graphmove2}:

$$\vcenter{\hbox{\scalebox{1}[1]{\includegraphics[]{Figures/Graphmove2a4}}}}
\sim
\vcenter{\hbox{\scalebox{1}[1]{\includegraphics[]{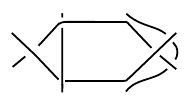}}}}\xrightarrow[]{2-\triangle}
\vcenter{\hbox{\scalebox{1}[1]{\includegraphics[]{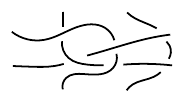}}}}$$

$$\vcenter{\hbox{\begin{overpic}[scale=1]{Figures/Graphmove2a3}
\end{overpic}}}
\sim
\vcenter{\hbox{\begin{overpic}[scale=1]{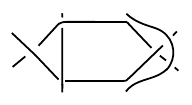}
\end{overpic}}}
\xrightarrow[]{2-\triangle}
\vcenter{\hbox{\begin{overpic}[scale=1]{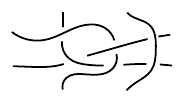}
\end{overpic}}}$$

$$\vcenter{\hbox{\scalebox{-1}[-1]{\includegraphics[]{Figures/Graphmove2a3}}}}
\sim
\vcenter{\hbox{\scalebox{-1}[-1]{\includegraphics[]{Figures/Graphmove2a4e}}}}
\xrightarrow[]{2-\triangle}
\vcenter{\hbox{\scalebox{-1}[-1]{\includegraphics[]{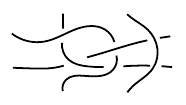}}}}
\xrightarrow[]{2-\triangle}
\vcenter{\hbox{\scalebox{-1}[-1]{\includegraphics[]{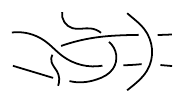}}}}$$

$$\vcenter{\hbox{\scalebox{-1}[-1]{\includegraphics[]{Figures/Graphmove2a4}}}}
\sim
\vcenter{\hbox{\scalebox{-1}[-1]{\includegraphics[]{Figures/Graphmove2a4a}}}}
\xrightarrow[]{2-\triangle}
\vcenter{\hbox{\scalebox{-1}[-1]{\includegraphics[]{Figures/Graphmove2a4b}}}}
\xrightarrow[]{2-\triangle}
\vcenter{\hbox{\scalebox{-1}[-1]{\includegraphics[]{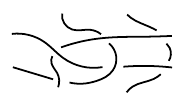}}}}.$$

Now consider the following two configurations:

$$\vcenter{\hbox{\begin{overpic}[scale=1]{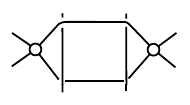}
\end{overpic}}}  \quad  \quad \vcenter{\hbox{\scalebox{1}[-1]{\includegraphics[]{Figures/Graphmove2d}}}}.$$
Two tangles from the two configurations are reducible via Reidemeister moves:

$$\vcenter{\hbox{\begin{overpic}[scale=1]{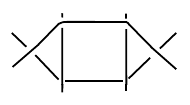}
\end{overpic}}}  \quad  \quad \vcenter{\hbox{\scalebox{1}[-1]{\includegraphics[]{Figures/Graphmove2da2}}}}.$$

Four tangles from the two configurations are reducible by a sequence of Reidemeister moves and $3$-moves:

$$\vcenter{\hbox{\begin{overpic}[scale=1]{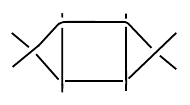}
\end{overpic}}}  \sim
\vcenter{\hbox{\begin{overpic}[scale=1]{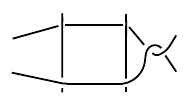}
\end{overpic}}}\xrightarrow[]{3-move}
\vcenter{\hbox{\begin{overpic}[scale=1]{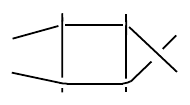}
\end{overpic}}}.$$
$$\vcenter{\hbox{\scalebox{1}[-1]{\includegraphics[]{Figures/Graphmove2da1}}}} \sim
\vcenter{\hbox{\scalebox{1}[-1]{\includegraphics[]{Figures/Graphmove2da3}}}}
\xrightarrow[]{3-move}
\vcenter{\hbox{\scalebox{1}[-1]{\includegraphics[]{Figures/Graphmove2da4}}}}.$$
$$\vcenter{\hbox{\scalebox{-1}[1]{\includegraphics[]{Figures/Graphmove2da1}}}} \sim
\vcenter{\hbox{\scalebox{-1}[1]{\includegraphics[]{Figures/Graphmove2da3}}}}
\xrightarrow[]{3-move}
\vcenter{\hbox{\scalebox{-1}[1]{\includegraphics[]{Figures/Graphmove2da4}}}}.$$
$$\vcenter{\hbox{\scalebox{-1}[-1]{\includegraphics[]{Figures/Graphmove2da1}}}} \sim
\vcenter{\hbox{\scalebox{-1}[-1]{\includegraphics[]{Figures/Graphmove2da3}}}}
\xrightarrow[]{3-move}
\vcenter{\hbox{\scalebox{-1}[-1]{\includegraphics[]{Figures/Graphmove2da4}}}}.$$

The remaining two tangles are $3$-move equivalent to each other and Figure \ref{fig:tangle3}:

$$\vcenter{\hbox{\begin{overpic}[scale=1]{Figures/Graphmove2da5}
\end{overpic}}}  \sim
\vcenter{\hbox{\begin{overpic}[scale=1]{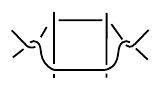}
\end{overpic}}}  
\xrightarrow[]{3-move}
\vcenter{\hbox{\scalebox{1}[-1]{\includegraphics[]{Figures/Graphmove2da5}}}}.$$

Consider the following two configurations:
$$\vcenter{\hbox{\begin{overpic}[scale=1]{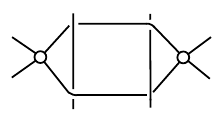}
\end{overpic}}}  \quad \quad 
\vcenter{\hbox{\scalebox{1}[-1]{\includegraphics[]{Figures/Graphmove2dc}}}}.$$

Two tangles from these configurations are $3$-move reducible:

$$\vcenter{\hbox{\begin{overpic}[scale=1]{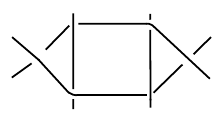}
\end{overpic}}}  \sim
\vcenter{\hbox{\begin{overpic}[scale=1]{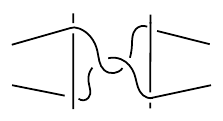}
\end{overpic}}}  
\xrightarrow[]{3-move}
\vcenter{\hbox{\begin{overpic}[scale=1]{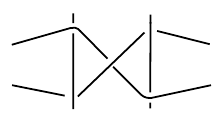}
\end{overpic}}}.$$
$$ \vcenter{\hbox{\scalebox{1}[-1]{\includegraphics[]{Figures/Graphmove2dc1}}}} \sim
\vcenter{\hbox{\scalebox{1}[-1]{\includegraphics[]{Figures/Graphmove2dc2}}}} 
\xrightarrow[]{3-move}
\vcenter{\hbox{\scalebox{1}[-1]{\includegraphics[]{Figures/Graphmove2dc3}}}}.$$
Four tangles from the last two configurations are three move equivalent to a tangle obtained from Figure \ref{fig:Graphmove0} or \ref{fig:Graphmove2}:

$$\vcenter{\hbox{\scalebox{1}[1]{\includegraphics[]{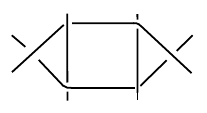}}}} \xrightarrow[]{3-move}
\vcenter{\hbox{\scalebox{1}[1]{\includegraphics[]{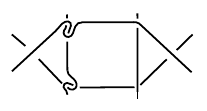}}}}
\sim
\vcenter{\hbox{\scalebox{1}[1]{\includegraphics[]{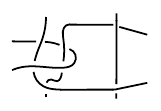}}}}
\xrightarrow[]{3-move}
\vcenter{\hbox{\scalebox{1}[1]{\includegraphics[]{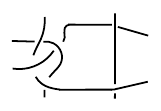}}}}$$ 
$$\vcenter{\hbox{\scalebox{1}[-1]{\includegraphics[]{Figures/Graphmove2dd}}}} \xrightarrow[]{3-move}
\vcenter{\hbox{\scalebox{-1}[-1]{\includegraphics[]{Figures/Graphmove2dd1}}}}
\sim
\vcenter{\hbox{\scalebox{1}[-1]{\includegraphics[]{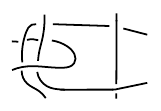}}}}
\xrightarrow[]{3-move}
\vcenter{\hbox{\scalebox{1}[-1]{\includegraphics[]{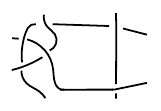}}}}.$$
By symmetry we have,
$$\vcenter{\hbox{\scalebox{-1}[-1]{\includegraphics[]{Figures/Graphmove2dd}}}} 
\xrightarrow[]{3-move}
\vcenter{\hbox{\scalebox{-1}[-1]{\includegraphics[]{Figures/Graphmove2dd6}}}}\text{ and } \vcenter{\hbox{\scalebox{-1}[1]{\includegraphics[]{Figures/Graphmove2dd}}}}
\xrightarrow[]{3-move}
\vcenter{\hbox{\scalebox{-1}[1]{\includegraphics[]{Figures/Graphmove2dd3}}}}.$$

The remaining two tangles are illustrated in Figures \ref{fig:tangle1} and \ref{fig:tangle2}.
\end{proof}

The other more sophisticated and very useful reduction is discussed in the following lemma:

\begin{lemma}\label{prop:reduce3}
    The configuration illustrated in Figure \ref{configurationreducible4} supports $9$-crossing tangles that are 3-move equivalent to a tangle with fewer crossings. 
\end{lemma}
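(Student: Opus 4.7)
The plan is to proceed by a case analysis on the crossing choices of the nine vertices of Figure \ref{configurationreducible4}, systematically reducing the number of cases by exploiting the symmetries already built into the configuration together with the technology developed in Lemma \ref{redrot}, Corollary \ref{Coro:reducingandmove}, and Lemma \ref{Lemma:congifmove2}. The first step is to locate inside the 9-vertex configuration the sub-configurations already understood: in particular, the two 2-triangle sub-patches of Figure \ref{2triangles-1ab}(a), the 2-triangle-with-square sub-patch of Figure \ref{2triangles-square2}, and any bigon that will appear after a rotation. Once these sub-patches are marked, the bulk of the argument reduces to controlling how they interact.

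Next I would apply Lemma \ref{redrot}(1) to dispose of every crossing pattern that is non-alternating on either 2-triangle sub-patch: by that lemma such tangles are already 3-move equivalent to a tangle with strictly fewer than 9 crossings, so we are done in those cases. This leaves only the crossing choices which are alternating on both 2-triangle sub-patches. For these I would use Lemma \ref{redrot}(2) to perform a $\pi/3$ rotation on one of the 2-triangle sub-patches; by Corollary \ref{Coro:reducingandmove}(3) the square that sits adjacent to it can be relocated across the rotated triangles, which typically produces either a bigon (immediately giving a reduction by the standard bigon 3-move) or a 3-triangle configuration in the sense of Figure \ref{reduction}. In the latter case Corollary \ref{Coro:reducingandmove}(2) supplies the reduction to fewer crossings.

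For the crossing choices that survive both of the above passes, I would appeal to Lemma \ref{Lemma:congifmove2}: the residual local pattern around the square and the two triangles matches one of the configurations of Figure \ref{fig:Graphmove1}, \ref{fig:Graphmove0}, or \ref{fig:Graphmove2}, whose tangles are either already reducible or are 3-move equivalent to one of the three specific tangles of Figure \ref{fig:tangle1}--\ref{fig:tangle3}. One then checks that, because the 9-vertex configuration has additional structure beyond the 6-vertex configurations of Lemma \ref{Lemma:congifmove2}, the residual tangles of Figures \ref{fig:tangle1}--\ref{fig:tangle3}, when embedded in our larger configuration, each in turn expose a new bigon or 3-triangle pattern, so Corollary \ref{Coro:reducingandmove}(2) or (4) again applies. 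Invariance under the obvious $\pi$-rotation of the configuration allows us to cut the case count essentially in half, and mirroring by Corollary \ref{Coro:reducingandmove}(1) cuts it again.

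The main obstacle, I expect, is the bookkeeping in the fully alternating case: there are several inequivalent ways the two alternating 2-triangles can be oriented relative to the central square, and one has to verify that in each of them a rotation plus Corollary \ref{Coro:reducingandmove}(3) really does expose a bigon or a 3-triangle pattern. To keep the argument tractable I would carry it out in a short table, one row per alternating crossing-choice (up to the symmetries above), listing the sub-patch that gets rotated, the intermediate configuration produced, and the final reducing move invoked. Once every row of that table terminates in a strictly fewer-crossing tangle, the lemma is proved.
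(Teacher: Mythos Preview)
Your plan and the paper's proof use the same toolbox (Lemma~\ref{redrot}, Corollary~\ref{Coro:reducingandmove}, Lemma~\ref{Lemma:congifmove2}) and head toward the same endgame, but the organisation differs. The paper does not begin by hunting for 2-triangle sub-patches in the 9-vertex graph; instead it observes that the 9-vertex configuration of Figure~\ref{configurationreducible4} contains the 6-vertex configuration of Figure~\ref{fig:Graphmove1} and applies Lemma~\ref{Lemma:congifmove2} \emph{immediately}. This yields six cases: (1) the 6-vertex sub-tangle already reduces; (2)--(3) it transforms to a tangle supported by Figure~\ref{fig:Graphmove0} or~\ref{fig:Graphmove2}, and the resulting 9-vertex graph is seen to contain Figure~\ref{fig:Graphreduce0} as a subgraph, so Corollary~\ref{Coro:reducingandmove}(4) finishes; (4)--(6) the sub-tangle is one of the three exceptional tangles of Figures~\ref{fig:tangle1}--\ref{fig:tangle3}. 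Your plan reaches this same juncture, only in a different order.

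Where your proposal is too optimistic is in the handling of those three exceptional tangles. You write that, once embedded in the 9-vertex configuration, each ``expose[s] a new bigon or 3-triangle pattern,'' with symmetry cutting the work in half. In the paper this step is not short: for each of the tangles in Figures~\ref{fig:tangle1}--\ref{fig:tangle3} one must make further crossing choices on the \emph{remaining} three vertices of the 9-vertex graph, and several of the resulting sub-cases require a chain of two or three 3-moves interleaved with Reidemeister moves (and at one point an application of Corollary~\ref{Coro:reducingandmove}(1) to flip an alternating 2-triangle) before any of the known reducible subgraphs (Figure~\ref{reduction} or Figure~\ref{fig:Graphreduce0}) actually appears. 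The paper carries this out explicitly over roughly a page of diagram chases; it is not a matter of ``rotation plus Corollary~\ref{Coro:reducingandmove}(3)'' alone. So your outline is correct in spirit and would converge to the paper's argument, but the table you anticipate for the alternating cases would need to be substantially longer and more branched than your description suggests.
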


\begin{proof}
   By Lemma \ref{Lemma:congifmove2} we have five cases to consider.
   \begin{enumerate}
       \item The configuration in Figure \ref{configurationreducible4} can support a $9$-crossing tangles that can be reduced by a sequence of Reidemeister moves and $3$-moves.
       
       \item The configuration in Figure \ref{configurationreducible4} can support a $9$-crossing tangle that can be changed (highlighted in gray) by a sequence of Reidemeister moves and $3$-moves to a $9$-crossing tangle from the following configuration:

$$\vcenter{\hbox{\begin{overpic}{Figures/ReducedMoves5c1}
\end{overpic}}} \xrightarrow[]{3-move} \vcenter{\hbox{\begin{overpic}{Figures/ReducedMoves5c2}
\end{overpic}}}= \vcenter{\hbox{\begin{overpic}[]{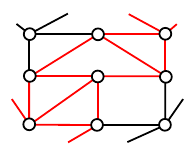}
\end{overpic}}}.$$

       Since this configuration contains Figure \ref{fig:Graphreduce0} as a sub-graph (highlighted in red), then by Corollary \ref{Coro:reducingandmove}(4) the tangles are $3$-move reducible.
       
         \item The configuration in Figure \ref{configurationreducible4} can support a $9$-crossing tangle that can be changed (highlighted in gray) by a sequence of Reidemeister moves and $3$-moves to a $9$-crossing tangle from the following configuration: 

         $$\vcenter{\hbox{\begin{overpic}[]{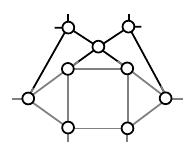}
\end{overpic}}} \xrightarrow[]{3-move} \vcenter{\hbox{\scalebox{-1}[1]{\begin{overpic}[]{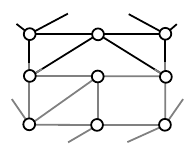}
\end{overpic}}}}=\vcenter{\hbox{\begin{overpic}[]{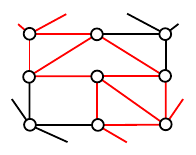}
\end{overpic}}}.$$

         Since this configuration contains Figure \ref{fig:Graphreduce0} as a sub-graph (hightlighted in red), then by Corollary \ref{Coro:reducingandmove}(4) the tangles are $3$-move reducible.
        
         \item The configuration in Figure \ref{configurationreducible4} is $3$-move equivalent to the following $9$-crossing tangle: 
         
         $$\vcenter{\hbox{\begin{overpic}[scale=1]{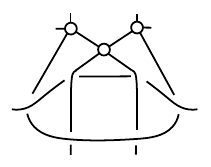}\end{overpic}}}.$$

         The following sequence of $3$-moves and Reidemeister moves show that this tangle-configuration is $3$-move equivalent to a $9$-crossing tangle-configuration containing 2-triangles.

         $$\vcenter{\hbox{\begin{overpic}[scale=1]{Figures/ReducedMoves5T1}\end{overpic}}} \sim \vcenter{\hbox{\begin{overpic}[scale=1]{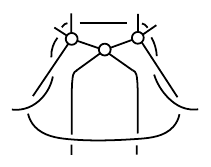}\end{overpic}}} \xrightarrow[]{3-move} \vcenter{\hbox{\begin{overpic}[scale=1]{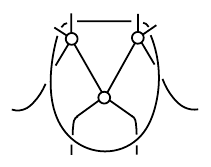}\end{overpic}}}$$
         $$\sim \vcenter{\hbox{\begin{overpic}[scale=1]{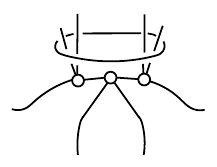}\end{overpic}}}  \xrightarrow[]{3-move} 
 \vcenter{\hbox{\begin{overpic}[scale=1]{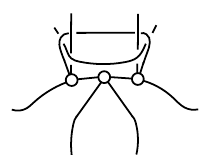}\end{overpic}}} .$$

         This new tangle-configuration comes from a 9-vertex configuration that contains Figure \ref{fig:Graphreduce0} as a sub-graph. 
         $$\vcenter{\hbox{\begin{overpic}[]{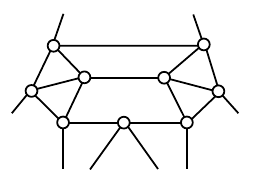}\end{overpic}}} \qquad = \qquad \vcenter{\hbox{\begin{overpic}[]{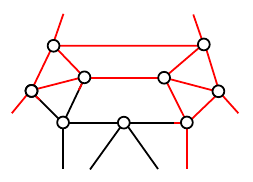}\end{overpic}}}.$$

         Corollary \ref{Coro:reducingandmove}(4), this configuration is $3$-move reducible.

         \item The configuration in Figure \ref{configurationreducible4} is $3$-move equivalent to the following $3$-move reducible $9$-crossing tangle: 
              
          $$\vcenter{\hbox{\begin{overpic}[]{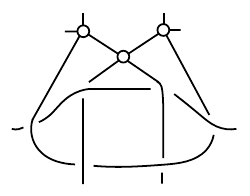}\end{overpic}}}.$$

First consider the tangle-configuration obtained from choosing a crossing for the middle vertex. 
 $$\vcenter{\hbox{{\includegraphics[]{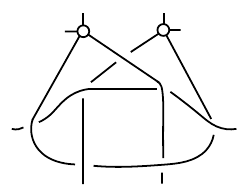}}}} \sim \vcenter{\hbox{{\includegraphics[]{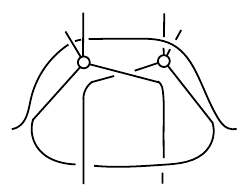}}}}.$$

 The application of a sequence of Reidemeister shows that this new tangle-configuration comes from a $9$-vertex configuration that contains Figure \ref{reduction} as a sub-graph. 

 \begin{equation}\label{eqn:configw3triangle}
\vcenter{\hbox{{\includegraphics[]{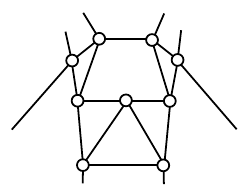}}}} = \vcenter{\hbox{{\includegraphics[]{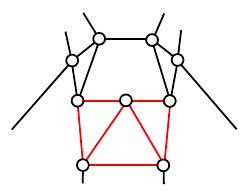}}}}.
 \end{equation}

 By Corollary \ref{Coro:reducingandmove}(2), this configuration is $3$-move reducible. 

\

Now, let the middle vertex be of the second crossing type and choose the crossing type for the right vertex. The following sequence of $3$-moves and Reidemeister moves show that this tangle-configuration is $3$-move equivalent to a $9$-crossing tangle-configuration containing $2$-triangles.

$$\vcenter{\hbox{{\includegraphics[]{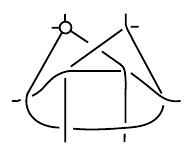}}}} \xrightarrow[]{3-move} \vcenter{\hbox{{\includegraphics[]{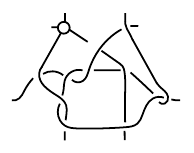}}}} \sim \vcenter{\hbox{{\includegraphics[]{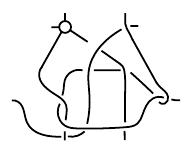}}}}  \sim \vcenter{\hbox{{\includegraphics[]{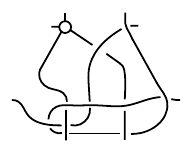}}}} $$

$$\sim \vcenter{\hbox{{\includegraphics[]{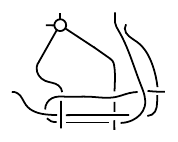}}}} \sim \vcenter{\hbox{{\includegraphics[]{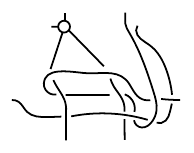}}}} \xrightarrow[]{3-move} \vcenter{\hbox{{\includegraphics[]{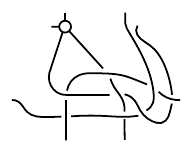}}}}.$$

This new tangle-configuration comes from a $9$-vertex configuration that contains Figure \ref{fig:Graphreduce0} as a sub-graph.
\begin{equation}\label{eqn:configreduce2}
\vcenter{\hbox{{\includegraphics[]{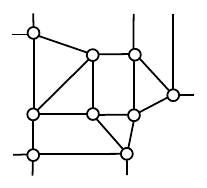}}}} = \vcenter{\hbox{{\includegraphics[]{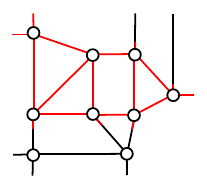}}}}.
\end{equation}

Corollary \ref{Coro:reducingandmove}(4), this configuration is $3$-move reducible.

Now let the right vertex be of the second crossing type. The following sequence of 3-move operations and Reidemeister moves show that this tangle-configuration is $3$-move equivalent to a $9$-crossing tangle-configuration containing $2$-triangles.

$$\vcenter{\hbox{{\includegraphics[]{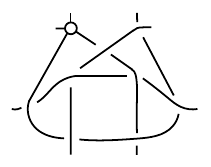}}}} \xrightarrow[]{3-move} \vcenter{\hbox{{\includegraphics[]{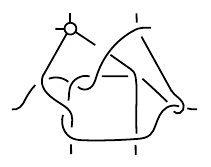}}}} \sim \vcenter{\hbox{{\includegraphics[]{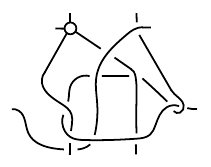}}}}  $$
$$\sim \vcenter{\hbox{{\includegraphics[]{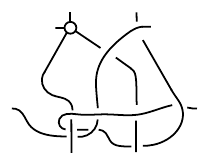}}}} \xrightarrow[]{3-move} 
\vcenter{\hbox{{\includegraphics[]{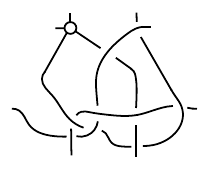}}}} \xrightarrow[]{\text{Cor. }  \ref{Coro:reducingandmove}(1) } \vcenter{\hbox{{\includegraphics[]{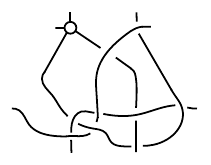}}}} $$
$$\sim \vcenter{\hbox{{\includegraphics[]{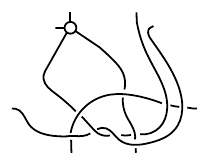}}}}\xrightarrow[]{3-move} 
\vcenter{\hbox{{\includegraphics[]{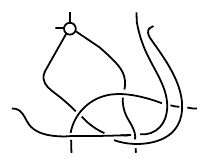}}}} \sim \vcenter{\hbox{{\includegraphics[]{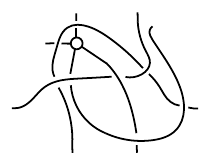}}}}.$$

One choice of crossing type for the remaining vertex yields a reducible 9-crossing tangle with a non-alternating $2$-triangle. Therefore we choose the second crossing type. By applying a sequence of $3$-move operations and Reidemeister moves on this tangle we obtain the following $9$-crossing tangle. 

$$\vcenter{\hbox{{\includegraphics[]{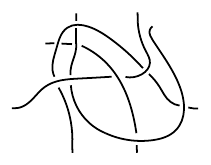}}}} \xrightarrow[]{\text{Cor. }  \ref{Coro:reducingandmove}(1) } \vcenter{\hbox{{\includegraphics[]{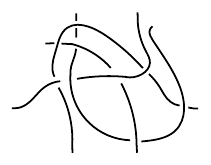}}}} \sim \vcenter{\hbox{{\includegraphics[]{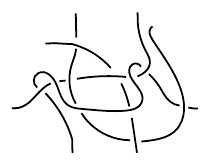}}}} $$

$$\xrightarrow[]{3-move} \vcenter{\hbox{{\includegraphics[]{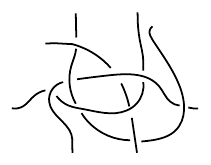}}}} \sim \vcenter{\hbox{{\includegraphics[]{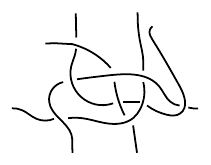}}}}.$$

This new tangle-configuration comes from a $9$-vertex configuration that contains Figure \ref{fig:Graphreduce0} as a sub-graph illustrated in Equation \ref{eqn:configreduce2}. By
Corollary \ref{Coro:reducingandmove}(4), this configuration is $3$-move reducible.

         \item The configuration in Figure \ref{configurationreducible4} is $3$-move equivalent to the following $3$-move reducible $9$-crossing tangle: 
         
          $$\vcenter{\hbox{\scalebox{-1}[1]{\includegraphics[]{Figures/ReducedMoves5T2.pdf}}}}.$$

First consider the tangle-configuration obtained from choosing a crossing for the middle vertex:
$$\vcenter{\hbox{\scalebox{-1}[1]{\includegraphics[]{Figures/ReducedMoves5T2a.pdf}}}} \sim \vcenter{\hbox{\scalebox{-1}[1]{\includegraphics[]{Figures/ReducedMoves5T2a1.pdf}}}}.$$

The application of a sequence of Reidemeister moves show that this new tangle-configuration comes from a $9$-vertex configuration that contains Figure \ref{reduction} as a sub-graph, illustrated in Equation \ref{eqn:configw3triangle}. By Corollary \ref{Coro:reducingandmove}(2), this configuration is $3$-move reducible.

\

Now consider the second crossing type for the middle vertex and make a choice on the crossing type for the right vertex. Similar to case (5) a sequence of $3$-moves and Reidemeister moves can be applied to show that this tangle-configuration is $3$-move equivalent to a $9$-crossing tangle-configuration obtained from the $9$-vertex configuration illustrated in Figure~\ref{eqn:configreduce2}. By Corollary \ref{Coro:reducingandmove}(4), this configuration is $3$-move reducible.

\

Now consider the second crossing type for the right vertex. Similar to case (5) a sequence of $3$-moves and Reidemeister moves can be applied to show that this tangle-configuration is $3$-move equivalent to a $9$-crossing tangle-configuration containing $2$-triangles:

$$\vcenter{\hbox{{\includegraphics[]{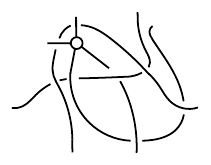}}}}.$$

One choice of crossing type for the remaining vertex yields a reducible 9-crossing tangle with a non-alternating $2$-triangle. Therefore we choose the second crossing type. Again, similar to case (5) a sequence of $3$-moves and Reidemeister moves can be applied to show that this tangle-configuration is $3$-move equivalent to a $9$-crossing tangle-configuration obtained from the $9$-vertex configuration illustrated in Figure~\ref{eqn:configreduce2}. By Corollary \ref{Coro:reducingandmove}(4), this configuration is $3$-move reducible.
   \end{enumerate}
\end{proof}

\begin{corollary}\label{coro:reduce3}
    The configuration illustrated in Figure~\ref{fig:Graphreduce9} supports $10$-crossing tangles that are three move equivalent to a tangle with fewer crossings. 
\end{corollary}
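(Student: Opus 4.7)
The plan is to reduce Corollary~\ref{coro:reduce3} to the already-established Lemma~\ref{prop:reduce3} rather than to redo a full case analysis on the ten vertices of Figure~\ref{fig:Graphreduce9}. I would first observe that the $10$-vertex configuration of Figure~\ref{fig:Graphreduce9} is obtained from the $9$-vertex configuration of Figure~\ref{configurationreducible4} by attaching one extra vertex; equivalently, Figure~\ref{configurationreducible4} appears as a sub-graph. The goal is therefore to show that after performing $3$-moves on the extra vertex (and possibly repositioning by Lemma~\ref{redrot} and Corollary~\ref{Coro:reducingandmove}(3)), one of the two crossing choices at that vertex produces a $10$-crossing tangle whose complement contains one of the reducible sub-tangles already isolated in Lemma~\ref{prop:reduce3}.

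More concretely, the step-by-step argument would proceed as follows. First, as in the proof of Lemma~\ref{prop:reduce3}, one exploits Lemma~\ref{Lemma:congifmove2}: any $2$-triangle sub-configuration not already in alternating position is $3$-move reducible by Corollary~\ref{Coro:reducingandmove}(1)--(2), so I may assume every $2$-triangle in the picture is alternating. Second, I apply Corollary~\ref{Coro:reducingandmove}(3) to migrate the square(s) between triangles, and Lemma~\ref{redrot}(2) to rotate the alternating $2$-triangles by $\pi/3$ as needed. This brings the configuration into a normal form in which a $9$-vertex sub-configuration matching Figure~\ref{configurationreducible4} is visible. Third, I invoke Lemma~\ref{prop:reduce3} on that sub-configuration: it guarantees that the nine crossings inside it reduce to strictly fewer crossings by a sequence of Reidemeister moves and $3$-moves. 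Since the extra vertex contributes at most one crossing, the resulting tangle has at most $8+1=9$ crossings, strictly fewer than $10$, as required.

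The step I expect to be the main obstacle is the bookkeeping needed to confirm that, for \emph{every} assignment of $2^{10}$ crossing choices on Figure~\ref{fig:Graphreduce9}, after the normalising $3$-moves one actually lands inside the hypothesis of Lemma~\ref{prop:reduce3}, rather than in a nearby configuration that Lemma~\ref{prop:reduce3} does not cover. The awkward cases are those where the extra vertex is adjacent to a $2$-triangle that is forced to be non-alternating only after Corollary~\ref{Coro:reducingandmove}(3) has been applied; here one must either swap to the mirror image via Corollary~\ref{Coro:reducingandmove}(1) or rerotate via Lemma~\ref{redrot}(2) before the reducible sub-configuration (one of Figures~\ref{fig:Graphreduce0}, \ref{fig:Graphreduce1}, or \ref{configurationreducible5}) becomes visible. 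Once every crossing pattern is matched to one of the five cases catalogued in the proof of Lemma~\ref{prop:reduce3}, the conclusion follows by direct appeal to that lemma.
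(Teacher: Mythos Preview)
Your overall strategy---reduce to Lemma~\ref{prop:reduce3} by first applying Corollary~\ref{Coro:reducingandmove}(3)---is exactly the paper's route, and it is correct. The paper's proof is a single line: one application of Corollary~\ref{Coro:reducingandmove}(3) (relocating the square relative to the alternating $2$-triangle pair) transforms the $10$-vertex configuration of Figure~\ref{fig:Graphreduce9} into a configuration that visibly contains Figure~\ref{configurationreducible4} as a subgraph, and then Lemma~\ref{prop:reduce3} applies to that $9$-vertex piece for \emph{every} crossing choice simultaneously.

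Where you go astray is in anticipating a $2^{10}$ case analysis and ``awkward cases'' that never materialise. Corollary~\ref{Coro:reducingandmove}(3) is already stated at the configuration level: its proof absorbs both crossing choices at the migrating square vertex and both alternating $2$-triangle tangles (via Corollary~\ref{Coro:reducingandmove}(1)). Likewise, Lemma~\ref{prop:reduce3} is proved for all $2^9$ crossing assignments on Figure~\ref{configurationreducible4}. So once the graph-level containment is exhibited, there is nothing left to check vertex by vertex; the extra crossing just rides along and the count drops from $10$ to at most $9$. Your steps~1--3 are fine, but the ``main obstacle'' paragraph can be deleted entirely.
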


\begin{proof}

$$\vcenter{\hbox{\begin{overpic}[]{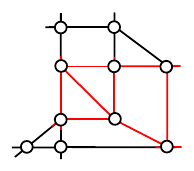}
\end{overpic}}} \xrightarrow[]{\text{Cor. \ref{Coro:reducingandmove}(3)}} \vcenter{\hbox{\begin{overpic}[]{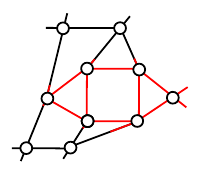}
\end{overpic}}}  = \vcenter{\hbox{\begin{overpic}[]{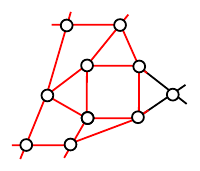}
\end{overpic}}}.$$
\end{proof}

\section{Computational process} \label{section:computation}

We use computational methods to exclude links with properties that imply $3$-move reducibility. This process is done inductively:\\

Suppose all links up to $k$ crossings can be reduced to a trivial link of $n$-components by using a sequence of $3$-moves and Reidemeister moves. In order to prove that all $k+1$ crossing links can be reduced to a trivial link of $n$-components by using a sequence of $3$-moves and Reidemeister moves, it suffices to prove that the links can be reduced to a link with fewer crossings. \\

Since the conjecture was proven for all links up to 11 crossings in \cite{P-2} and links with 12 crossings in \cite{Che}, then we start by creating all basic Conway polyhedra supporting links of 13 through 20 crossings. We obtain the basic Conway polyhedra with no bigons from plantri \cite{plantri}. More precisely, we searched for $4$-edge-connected simple quartic graphs in plantri with the command:

\begin{quote}
\texttt{plantri -adq -c2 \emph{cells}}
\end{quote}

Here \textit{cells} is the number of 2-cells in the the link diagram when drawn on the sphere; by Euler characteristic, this is two more than the number of crossings.

By Corollary \ref{Coro:reducingandmove}(2), any link supported by a $k$-vertex basic Conway polyhedra that contains the $3$-triangle configuration (Figure \ref{reduction}) as a subgraph, can be $3$-move reduced to a link supported by a $m$ basic Conway polyhedra where $m<k$. We implemented into Regina a search for basic Conway polyhedra containing the $3$-triangle configuration as a subgraph and used the code to exclude these basic Conway polyhedra from our list.\\

From each of the remaining basic Conway polyhedra, we build a corresponding collection of links in Regina by replacing each vertex with a crossing in two possible ways (over or under), in a way that avoids obvious non-minimal diagrams. For this we extend the pre-existing knot enumeration code \cite{Burton:Knots19} to include links. We then exhaustively search for and remove all links that are Reidemeister 3 equivalent to a diagram with a bigon, using a breadth-first search through link diagrams. Next, we do a similar search, this time allowing all Reidemeister moves and a small number of extra crossings; this time we only exclude links that are isotopic to a diagram containing a bigon and with the original number of crossings. To implement these breadth-first searches, we extend Regina's knot signatures \cite{Burton:Knots19} to support links, in order to avoid revisiting diagrams that are combinatorially isomorphic to diagrams already seen. \\

We implemented code in Regina to find and exclude all polyhedra supporting links with fewer than five Seifert circles \cite{Che}.  In this step we assign all possible orientations to the components of each supported link: If any of these orientations gives fewer than five Seifert circles then we excluded the link. What remains are links supported by 608 polyhedra ($7 \times$ 16-vertex, $15 \times$ 17-vertex, $56 \times$ 18-vertex, $125 \times$ 19-vertex, and $405 \times$ 20-vertex polyhedra). \\

The graphs were converted to Mathematica's built in Graph symbol \cite{Mathematica}. In Mathematica, we used the \emph{FindIsomorphicSubgraph} function to search for and remove polyhedra containing as a subgraph any configuration illustrated in Figures \ref{configurationreducible4},  \ref{fig:Graphreduce9}, and \ref{reducible3move}. All Conway polyhedra supporting links of 13 through 17 crossings are eliminated from this search and only 11 graphs remained; $1 \times$ 18-vertex, $1 \times$ 19 vertex, and $9 \times$ 20-vertex polyhedra. We built the links from the 11 polyhedra in Regina using the previously explained method. This process produced 12 links with 18 crossings, 31 links with 19 crossings, and 450 links with 20 crossings. \\

By using Regina we found that all links (with properly chosen orientations) supporting the remaining Conway polyhedra have no more than five Seifert circles. We extract the PD codes from Regina then use Vogel-Traczyk's algorithm \cite{Vo} coded in the Knot Theory Mathematica package \cite{knottheory} to obtain the braid words for the links. We observe that the conversion into braid words does not change the number of crossings and the index of each braid word is 5. From \cite{Che}, we may exclude all of the 18 and 19 crossing links.

\

We explore the remaining 450 five-braid diagrams of 20 crossings. We test whether these braid words are conjugate to the Chen link diagram or its mirror image in $C_5=B_5/(\sigma_1^3)$ by using the software GAP \cite{GAP}.\footnote{The 450 links were enumerated and denoted by L1 through L450 in \cite{Guo}.} 

\

We find three $20$ crossing links that are conjugate to the Chen link in $C_5$:

\begin{enumerate}
    \item[L235] PD[X[3, 33, 4, 40], X[4, 13, 5, 12], X[6, 29, 7, 28], 
   X[7, 36, 8, 37], X[10, 39, 11, 38], X[13, 22, 14, 21], 
   X[14, 30, 15, 29], X[16, 1, 9, 8], X[17, 10, 18, 9], 
   X[20, 5, 21, 6], X[23, 32, 24, 31], X[24, 2, 17, 1], 
   X[26, 12, 27, 11], X[27, 20, 28, 19], X[30, 34, 31, 35], 
   X[32, 3, 25, 2], X[33, 23, 34, 22], X[35, 16, 36, 15], 
   X[37, 18, 38, 19], X[39, 25, 40, 26]]
   \item[L270] PD[X[2, 25, 3, 32], X[4, 13, 5, 12], X[6, 22, 7, 21], 
   X[7, 37, 8, 36], X[10, 18, 11, 19], X[13, 26, 14, 27], 
   X[15, 38, 16, 37], X[16, 1, 9, 8], X[17, 32, 18, 31], 
   X[19, 34, 20, 35], X[22, 14, 23, 15], X[24, 2, 17, 1], 
   X[25, 39, 26, 40], X[27, 6, 28, 5], X[28, 21, 29, 20], 
   X[30, 9, 31, 10], X[33, 12, 34, 11], X[35, 29, 36, 30], 
   X[38, 23, 39, 24], X[40, 4, 33, 3]] 
   \item[L298] PD[X[3, 33, 4, 40], X[4, 13, 5, 12], X[6, 22, 7, 21], 
   X[7, 36, 8, 37], X[10, 39, 11, 38], X[13, 31, 14, 30], 
   X[14, 23, 15, 22], X[16, 1, 9, 8], X[19, 12, 20, 11], 
   X[20, 29, 21, 28], X[23, 34, 24, 35], X[24, 2, 17, 1], 
   X[25, 18, 26, 17], X[26, 10, 27, 9], X[29, 5, 30, 6], 
   X[32, 3, 25, 2], X[33, 32, 34, 31], X[35, 16, 36, 15], 
   X[37, 27, 38, 28], X[39, 18, 40, 19]].
\end{enumerate}

  We also find three $20$ crossing links that are conjugate to Chen's mirror image in $C_5$:
\begin{enumerate}
    \item[L176] PD[X[2, 25, 3, 32], X[4, 20, 5, 19], X[6, 36, 7, 35], 
   X[7, 29, 8, 30], X[11, 5, 12, 6], X[13, 26, 14, 27], 
   X[15, 24, 16, 23], X[16, 1, 9, 8], X[18, 33, 19, 34], 
   X[20, 13, 21, 12], X[22, 38, 23, 37], X[24, 2, 17, 1], 
   X[25, 39, 26, 40], X[27, 22, 28, 21], X[30, 9, 31, 10], 
   X[31, 17, 32, 18], X[34, 11, 35, 10], X[36, 28, 37, 29], 
   X[38, 14, 39, 15], X[40, 4, 33, 3]]

\item[L246] PD[X[2, 25, 3, 32], X[4, 13, 5, 12], X[6, 28, 7, 29], 
   X[7, 37, 8, 36], X[11, 33, 12, 34], X[13, 22, 14, 21], 
   X[15, 38, 16, 37], X[16, 1, 9, 8], X[17, 10, 18, 9], 
   X[20, 5, 21, 6], X[22, 26, 23, 27], X[24, 2, 17, 1], 
   X[25, 39, 26, 40], X[27, 15, 28, 14], X[30, 18, 31, 19], 
   X[31, 10, 32, 11], X[34, 20, 35, 19], X[35, 29, 36, 30], 
   X[38, 23, 39, 24], X[40, 4, 33, 3]]

\item[L249] = PD[X[2, 25, 3, 32], X[4, 13, 5, 12], X[6, 36, 7, 35], 
   X[7, 29, 8, 30], X[11, 33, 12, 34], X[13, 22, 14, 21], 
   X[15, 38, 16, 37], X[16, 1, 9, 8], X[17, 10, 18, 9], 
   X[20, 5, 21, 6], X[22, 26, 23, 27], X[24, 2, 17, 1], 
   X[25, 39, 26, 40], X[27, 15, 28, 14], X[30, 18, 31, 19], 
   X[31, 10, 32, 11], X[34, 20, 35, 19], X[36, 28, 37, 29], 
   X[38, 23, 39, 24], X[40, 4, 33, 3]].
\end{enumerate}

The process of testing each link against the Chen link and its mirror image and then converting back to the braid word is provided in GitHub \cite{Guo}.

\

By using Regina we verify that $L235 \sim L298$,  $L176\sim L246 \sim L249$, and $L270 \sim \text{Chen link}$ by a sequence of Reidemeister moves. We verify that $L235, L176,$ the Chen link, and their mirror images are not isotopic to each other by using the Jones polynomial. Recall that the Jones polynomial of the link distinguishes the link from its mirror image when the coefficients are not palindromic. 
\begin{eqnarray*}
    J_{L176}(t) &=& 232 - t^{-7} + 9t^{-6} - 36t^{-5} + 85t^{-4} - 144t^{-3} + 202t^{-2} - 232t^{-1} - 
 194 t + 140 t^2 - 67 t^3 + 18 t^4 \\
 &&+ 18 t^5 - 25 t^6 + 15 t^7 - 
 5 t^8 + t^9 \\
    J_{L235}(t) &=& t^{16}-8 t^{15}+27 t^{14}-51 t^{13}+66 t^{12}-63 t^{11}+46 t^{10}-9 t^9-18 t^8+52 t^7-66 t^6+64 t^5-43 t^4 \\
    &&+23 t^3-5 t^2\\
    J_{L270}(t) &=& 221 + 1t^{-6} - 8t^{-5} + 32t^{-4} - 78t^{-3} + 136t^{-2} - 186t^{-1} - 218 t + 
 193 t^2 - 140 t^3 + 81 t^4 - 26 t^5 \\
 &&+ 14 t^7 - 8 t^8 + 2 t^9.
\end{eqnarray*}

Below we provide in braid form the six pairwise non-isotopic $20$ crossing links that are counterexamples to the Montesinos-Nakanishi conjecture, all of which are $3$-move equivalent to the Chen link: 

\begin{equation}\label{eqn1}
\text{The Chen link: } \sigma^{-1}_{1} \sigma_{ 2} \sigma^{-1}_{1} \sigma_{ 3} \sigma_{ 2} \sigma^{-1}_{4} \sigma_{ 3} \sigma_{ 2} \sigma^{-1}_{1} \sigma_{ 2} \sigma^{-1}_{4} \sigma_{ 3} \sigma_{ 2} \sigma^{-1}_{1} \sigma_{ 2} \sigma^{-1}_{4} \sigma_{ 3} \sigma_{ 2} \sigma^{-1}_{4} \sigma_{ 3}
\end{equation}
\begin{equation}
\text{Chen's mirror image: }\sigma_{1} \sigma^{-1}_{2} \sigma_{ 1} \sigma^{-1}_{3} \sigma^{-1}_{2} \sigma_{ 4} \sigma^{-1}_{3} \sigma^{-1}_{2} \sigma_{ 1} \sigma^{-1}_{2} \sigma_{ 4} \sigma^{-1}_{3} \sigma^{-1}_{2} \sigma_{ 1} \sigma^{-1}_{2} \sigma_{ 4} \sigma^{-1}_{3} \sigma^{-1}_{2} \sigma_{ 4} \sigma^{-1}_{3}
\end{equation}
\begin{equation}
\sigma_{1} \sigma_{ 2} \sigma^{-1}_{3} \sigma_{ 2} \sigma_{ 1} \sigma_{ 4} \sigma_{ 3} \sigma^{-1}_{2} \sigma_{ 1} \sigma_{ 3} \sigma^{-1}_{4} \sigma_{ 3} \sigma_{ 2} \sigma_{ 1} \sigma_{ 3} \sigma^{-1}_{4} \sigma_{ 3} \sigma^{-1}_{2} \sigma_{ 3} \sigma_{ 4} 
 \end{equation}
\begin{equation}
\sigma^{-1}_{1} \sigma^{-1}_{2} \sigma_{ 3} \sigma^{-1}_{2} \sigma^{-1}_{1} \sigma^{-1}_{4} \sigma^{-1}_{3} \sigma_{ 2} \sigma^{-1}_{1} \sigma^{-1}_{3} \sigma_{ 4} \sigma^{-1}_{3} \sigma^{-1}_{2} \sigma^{-1}_{1} \sigma^{-1}_{3} \sigma_{ 4} \sigma^{-1}_{3} \sigma_{ 2} \sigma^{-1}_{3} \sigma^{-1}_{4}
\end{equation}
\begin{equation}
\sigma^{-1}_{1} \sigma_{ 2} \sigma_{ 3} \sigma_{ 2} \sigma^{-1}_{4} \sigma^{-1}_{3} \sigma_{ 2} \sigma^{-1}_{1} \sigma_{ 2} \sigma^{-1}_{3} \sigma_{ 2} \sigma_{ 4} \sigma^{-1}_{3} \sigma_{ 2} \sigma^{-1}_{1} \sigma_{ 2} \sigma_{ 4} \sigma^{-1}_{3} \sigma_{ 2} \sigma^{-1}_{4}
\end{equation}
\begin{equation}\label{eqn6}
\sigma_{1} \sigma^{-1}_{2} \sigma^{-1}_{3} \sigma^{-1}_{2} \sigma_{ 4} \sigma_{ 3} \sigma^{-1}_{2} \sigma_{ 1} \sigma^{-1}_{2} \sigma_{ 3} \sigma^{-1}_{2} \sigma^{-1}_{4} \sigma_{ 3} \sigma^{-1}_{2} \sigma_{ 1} \sigma^{-1}_{2} \sigma^{-1}_{4} \sigma_{ 3} \sigma^{-1}_{2} \sigma_{ 4}.
\end{equation}
    
\section{Burnside group of a link}
We use Burnside groups of links to show that the Chen link is not 3-move equivalent to a trivial link.
For every link we associate a group, which we call the $n$th Burnside group of the link, $\mathcal{B}_n(L)$, and prove that the group is preserved by $n$-moves. For $n=3$, the group is always finite and can be used to find links that are not $3$-move equivalent to a trivial link \cite{DP1,DP2}. 
One succinct definition of $\mathcal{B}_n(L)$ is $\pi_1(M^{(2)}_L)/(w^n)$, where $\pi_1(M^{(2)}_L)$ is the homology of the double branched cover of $S^3$ branched along $L$, however for practical applications we need a workable, diagrammatic definition of $\mathcal{B}_n(L)$.
We will do this in two steps, first discussing the classical Burnside groups and then the core group of a link via a link diagram.

\subsection{Classical Burnside groups}\label{burnsidesection}\ 
Let us first recall the famous Burnside conjecture concerning groups $B(r,n)=F_r/(w^n)$, where $F_r$ is the free group on $r$ generators and $w$ is an arbitrary element of $F_r$. 
\begin{problem}\label{Burnside} (Burnside 1902 \cite{Bu}) For what
values of $r$ and $n$ is the Burnside group $B(r,n)$ finite$?$
\end{problem}
In \cite{Bu} Burnside proved that for $n=3$ the group $B(r,3)$ is finite. Furthermore, Levi and van der Waerden \cite{LW} proved that the group $B(r,3)$ has $3^{n+ {\binom{n}{2}}+ \binom{n}{3}}$ elements.
For the actual status of the Burnside problem see \cite{Kos,V-Lee}.\footnote{It is finite for $r=2$, and for $n=2,3,4,6$ with arbitrary $r$. $B(r,n)$ is infinite for $r\geq 2$, and $n \geq 665$ with $r$ odd \cite{NA,Adj}. $B(2,2^k)$ was found to be infinite for $k\geq 13$ \cite{Lys}. We know that $B(2,5)$ is either infinite or has $5^{34}$ elements \cite{V-Lee}.}
\subsection{The core group of a link}
The $n$th Burnside group of a link, $\mathcal{B}_n(D)$, is the quotient of the (reduced) core group of the link.  
The core group of the link (denoted Cor$(D)$) is defined on a diagram D
 as the group generated by arcs of $D$ with the relation for every crossing $ba^{-1}bc^{-1}=1$ where $b$ is the overcrossing arc and 
$a$ and $c$ are undercrossings (see Figure \ref{CoreGroup}).
Observe that if we abelianize $\text{Cor}(D)$ we obtain the 
(universal) group of Fox colorings ($2b-a-c=0$).
If we arbitrarily assign the identity element, $1$, to one arc we obtain the reduced 
Core group $\text{Cor}^{red}(D)$.
\begin{figure}[ht]
\centering
\includegraphics[scale=.4]{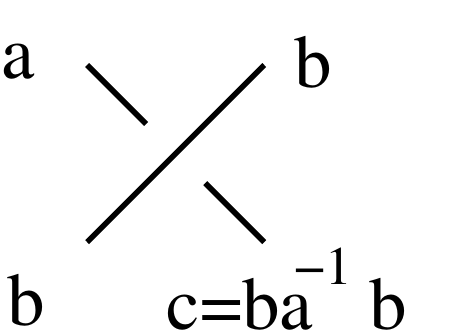}
\caption{Core group relation at the crossing: $ba^{-1}bc^{-1}=1.$}
\label{CoreGroup}
\end{figure}
\begin{theorem}[\cite{P-3, Wad}]
$\text{Cor}^{red}(D)= \pi_1(M^{(2)}_D)$ (the fundamental group of the double branch cover of $S^3$ with branching set $D$).
\end{theorem}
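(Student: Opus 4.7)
The plan is to compare presentations directly. I would start from the Wirtinger presentation of the link group $G_L = \pi_1(S^3\setminus L)$, with one generator $x_a$ for each arc of $D$ and one relation $x_b x_a x_b^{-1} x_c^{-1}$ per crossing (with $b$ the overarc and $a,c$ the underarcs). The standard orbifold description of a double branched cover gives $\pi_1(M^{(2)}_L)$ as the kernel of the homomorphism $\phi\colon \Pi \to \mathbb{Z}/2$ sending every meridian to $1$, where $\Pi = G_L / \langle\langle x_a^2 \rangle\rangle$ is the orbifold fundamental group of $(S^3, L)$ with branching index $2$. The problem thus reduces to exhibiting a presentation of $\ker\phi$ inside $\Pi$ that matches $\mathrm{Cor}^{\mathrm{red}}(D)$.

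I would then fix one arc $a_0$, set $t = x_{a_0}$ (so $t^2 = 1$ in $\Pi$), and run Reidemeister-Schreier with the Schreier transversal $\{1, t\}$. Each Wirtinger generator $x_a$ a priori produces two generators $x_a t$ and $t x_a$ of $\ker\phi$, but the identity $(x_a t)(t x_a) = x_a^2 = 1$ in $\Pi$ shows they are mutual inverses, so a single generator $y_a := x_a t$ per arc is sufficient, with $y_{a_0} = 1$ by construction. Moreover, conjugation by $t$ inverts each $y_a$: $t y_a t^{-1} = y_a^{-1}$, again because $x_a^2 = 1$.

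The core computation is to rewrite the Wirtinger relation in terms of the $y_a$'s. Substituting $x_a = y_a t$ into $x_b x_a x_b^{-1} = x_c$ and repeatedly applying $t y_a t^{-1} = y_a^{-1}$ together with $t^2 = 1$ collapses everything to $y_b y_a^{-1} y_b = y_c$, which is precisely the core group relation $b a^{-1} b c^{-1} = 1$. The Reidemeister-Schreier rewriting of $x_a^2 = 1$ produces the tautology $y_a y_a^{-1} = 1$, so no new relations appear. Hence $\ker\phi$ admits a presentation with generators $\{y_a\}_a$, crossing relations $y_b y_a^{-1} y_b y_c^{-1}$, and normalization $y_{a_0} = 1$, which is exactly $\mathrm{Cor}^{\mathrm{red}}(D)$.

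The main obstacle is purely computational: one has to verify carefully that both cosets $\{1, t\}$ yield the same core relation (up to equivalence), and that the Wirtinger conjugation relation $b a b^{-1} = c$ really collapses to the core ``double conjugation'' relation $b a^{-1} b = c$. This collapse is precisely where the branching is felt, since it uses $x_a^2 = 1$ in an essential way; dropping it would instead give the fundamental group of the unbranched double cover.
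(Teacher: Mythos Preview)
The paper does not give its own proof of this statement: the theorem is quoted from \cite{P-3,Wad} and used as a black box, with no argument supplied. So there is nothing in the paper to compare your proof against.

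That said, your argument is correct and is essentially the classical one. The identification $\pi_1(M^{(2)}_L)=\ker\bigl(\Pi\to\mathbb{Z}/2\bigr)$ with $\Pi=G_L/\langle\!\langle x_a^2\rangle\!\rangle$ is the orbifold-covering description of the double branched cover, and your Reidemeister--Schreier rewrite is carried out accurately: with $y_a=x_a t$ and $t y_a t^{-1}=y_a^{-1}$, the Wirtinger relation $x_b x_a x_b^{-1}=x_c$ becomes $y_b y_a^{-1} y_b = y_c$, independently of the crossing sign, and the $t$-coset rewrite of the same relator yields the inverse relation, hence nothing new. The only place worth adding a sentence is the input identification $\pi_1(M^{(2)}_L)=\ker\phi$: you call it ``standard'', which it is, but since the rest of your proof is fully explicit it would be in keeping to justify it in one line (e.g.\ via orbifold covering theory, or by noting that Dehn-filling the boundary tori of the unbranched double cover of $S^3\setminus L$ kills precisely the normal closure of the $x_a^2$ inside $\ker(G_L\to\mathbb{Z}/2)$).
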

If we apply an $n$-move to our diagram then arcs $(b,a)$ go to $((ba^{-1})^nb,(ba^{-1})^na)$ and we observe that if  $(ba^{-1})^n=1$ then the group is unchanged (this is illustrated in Figure \ref{Burnside-n-move}).
This was motivation in \cite{DP1,DP2} to introduce the concept of Burnside group of links.
\begin{figure}[H]
\centering
\includegraphics[scale=.23]{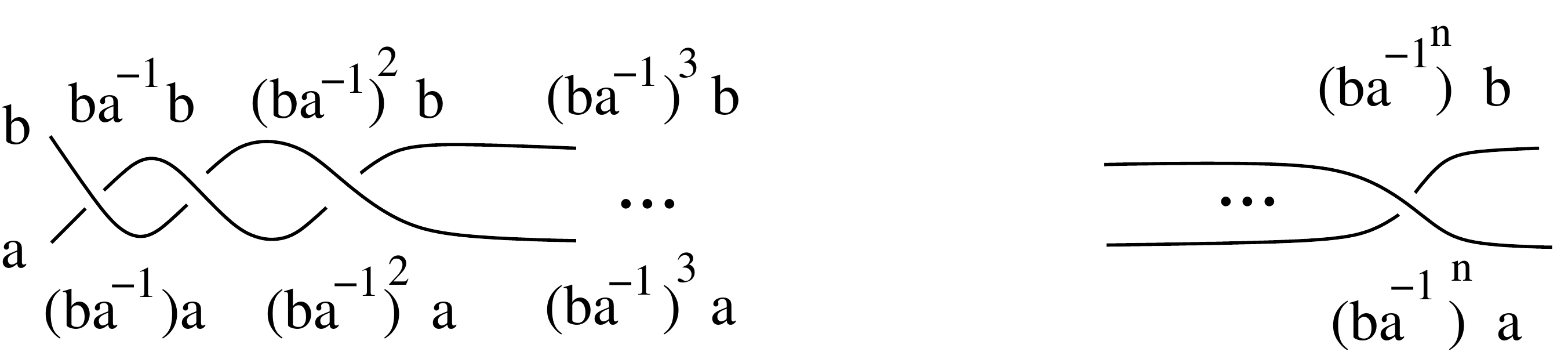}
\caption{$n$-move changes $(a,b)$ to $(ba^{-1})^na,(ba^{-1})^nb$.}
\label{Burnside-n-move}
\end{figure}
\subsection{The Burnside group of a link from the core group}\ 
The $n$th Burnside group of $D$, $\mathcal{B}_n(D)$, is the quotient of $\text{Cor}^{red}(D)$ by the subgroup  generated by all elements of type $w^n$ where $w$ is an arbitrary element of  
$\text{Cor}^{red}(D)$ (not only a generator). For us the classical result of 
Burnside and Levi-van der Waerden, mentioned before is important because $F_r/(w^3)$ is a finite group of $3^{n+ {\binom{n}{2}}+ {\binom{n}{3}}}$ elements, where $F_r$ is the free group of $r$ generators.
In particular, $|F_4/(w^3)|= 3^{4+6+4}=3^{14}=4,782,969$. Notice that the classical $n$th Burnside group 
is the $n$th Burnside group of the trivial link of $r+1$ components.

\subsection{5-braids, the 3-move operation, and the third Burnside group}
For all the remaining 493 links up to 20 crossings discussed in Section \ref{section:computation}, there exists an orientation such that the oriented link has $5$ Seifert circles. By the Yamada theorem, this implies that they all have $5$-braid representatives (in the conversion one uses Vogel- Traczyk algorithm \cite{Tra,Vo}). By Coxeter's result in \cite{Cox}, the group $C_5=B_5/(\sigma_1^3)$ is finite and has 102 conjugacy classes. Qi Chen proved that any closed 5-braid is 3-move equivalent to a trivial link or the 5-braid of Figure \ref{ChenLink}, which we call the Chen link. In particular, if a closed 5-braid is congujate to the Chen link or its mirror image, then it is $3$-move equivalent to the Chen link, otherwise it is 3-move equivalent to a trivial link.
The third Burnside  group of a link allows us to conclude that the Chen link is not $3$-move equivalent to any trivial link (its third Burnside group has $3^{10}$ elements, unlike any trivial link, \cite{DP1}). In Section \ref{section:computation}, every link up to $20$ crossings are 3-move equivalent to a trivial link or the Chen link therefore confirming Theorem \ref{Main}.

\section{Future directions}\label{Future} 
Cubic skein modules are defined using a cubic skein relation, which is a deformation of a $3$-move and has the form $b_3D_3+b_2D_2+b_1D_1+b_0D_0+ b_\infty D_\infty =0$. Thus, 
for $b_3=-b_0=1$, $b_2=b_1=b_\infty =0$ this skein relation describes $3$-move equivalence. An in-depth lecture on skein modules is given in \cite{PBIMW} and the fundamentals of cubic skein modules are discussed in \cite{BCGIMMPW}. We also refer the reader to \cite{DJP,P-5,P-6,P-7,P-Ts}. It is now a natural question whether our main result generalizes to cubic skein modules, in particular, whether any link in $S^3$ of 19 or less crossings is a linear combination of trivial links when $b_3$ and $b_0$ are invertible.  

\begin{conjecture}
    In the cubic skein module of $S^3$, with $b_0$ and $b_3$ invertible, every link up to 19 crossings is a linear combination of trivial links. Every link up to 20 crossings is a linear combination of trivial links and the Chen link.  
\end{conjecture}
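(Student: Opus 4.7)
The plan is to proceed by strong induction on the crossing number $n$, using Theorem \ref{Main} as the structural skeleton and replacing each $3$-move by an application of the cubic skein relation. Because both $b_3$ and $b_0$ are invertible, the relation
$$b_3 D_3 + b_2 D_2 + b_1 D_1 + b_0 D_0 + b_\infty D_\infty = 0$$
can be rewritten as $D_3 = -\frac{1}{b_3}(b_2 D_2 + b_1 D_1 + b_0 D_0 + b_\infty D_\infty)$, so in the cubic skein module a local $D_3$ pattern equals a scalar multiple of $D_0$ plus a linear combination of diagrams in which the same local disk contains $D_1$, $D_2$, or $D_\infty$, each with strictly fewer than three local crossings. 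Consequently, any reductive $3$-move (one that shrinks $D_3$ to $D_0$) translates into an equation $L = c\,L' + R$ in the module, where $L'$ is the result of the move and $R$ is a linear combination of diagrams with strictly fewer crossings than $L$.

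For the inductive step, given a link $L$ with at most $19$ (respectively $20$) crossings, I would follow the sequence of Reidemeister moves and $3$-moves supplied by Theorem \ref{Main}. Reidemeister moves preserve module elements, while each $3$-move contributes an equation of the form above. Composing these equations expresses $L$ as a scalar multiple of a trivial link (respectively a trivial link or the Chen link with extra trivial components) plus a linear combination of correction terms $R_j$, and induction then handles each $R_j$. A parallel route, mirroring the braid-theoretic part of the paper, is to reduce via the Vogel--Traczyk procedure to linear combinations of closed $5$-braids and then work inside the cubic quotient of $B_5$; this quotient should be a finitely generated module whose generators refine the $102$ conjugacy classes of $C_5 = B_5/(\sigma_1^3)$, and one would show that the trivial classes (plus the Chen class when $n=20$) already span it.

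The main obstacle is controlling the crossing numbers of the correction terms $R_j$. Several key reductions in Section~2, notably the algebraic $\pi/3$ rotation of Lemma \ref{redrot} and the sequences in Lemmas \ref{Lemma:congifmove2} and \ref{prop:reduce3}, pass through intermediate diagrams with \emph{more} crossings than $L$, so the naive bound on $R_j$ can exceed $n-1$. To close the induction one must either reorganize each such reduction so that only crossing-non-increasing $3$-moves are applied, i.e.\ applying the skein relation only at genuine $D_3$ patterns already present in the diagram, or strengthen the inductive hypothesis to cover all links with crossing number at most $n + K$, where $K$ bounds the excess intermediate crossings appearing across the reductions. The first approach seems cleaner but requires a case-by-case reworking of every configuration in Figures \ref{fig:first3movereducible} and \ref{reducible3move}, and this is where we expect the technical work to concentrate.
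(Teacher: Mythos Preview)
The statement you are attempting to prove is labelled a \emph{Conjecture} in the paper and sits in the ``Future directions'' section; the authors do not offer any proof, so there is no argument in the paper to compare your proposal against. What you have written is therefore not a reconstruction of an existing proof but a proposed attack on an open problem, and your own final paragraph already acknowledges that it is incomplete.

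Your outline is a natural one and the obstacle you isolate is exactly the real difficulty. Two further points are worth flagging. First, your ``reductive'' rewriting $D_3=-b_3^{-1}(b_2D_2+b_1D_1+b_0D_0+b_\infty D_\infty)$ only helps when the sequence from Theorem~\ref{Main} removes three half-twists; but many steps in Section~2 (indeed, Lemma~\ref{redrot} itself) are crossing-\emph{increasing} $3$-moves $D_0\to D_3$, and solving the relation for $D_0$ then produces correction terms $D_1,D_2,D_3$ with \emph{more} crossings than the original link, not fewer. So neither direction of the skein relation alone keeps the induction honest; you would genuinely need the ``bounded excess $K$'' strengthening, and $K$ is not obviously uniform across the computer-assisted reductions of Section~\ref{section:computation}. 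Second, the alternative braid route you sketch conflates the group quotient $C_5=B_5/(\sigma_1^3)$ with an algebra quotient: the relevant object for the cubic skein module is a cubic Hecke-type algebra $R[B_5]/(\sigma_i^3+\cdots)$, and its module structure over $R$ need not be governed by the $102$ conjugacy classes of $C_5$ in the way Markov-trace arguments require. Establishing that the closures of a finite generating set for this algebra are already linear combinations of trivial links (plus the Chen link) is essentially a restatement of the conjecture, not a reduction of it.
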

In 1979, before the 3-move conjecture was formally stated, Nakanishi formulated the $4$-move conjecture\footnote{See \cite{P-1} for a discussion about $n$-moves.}:
\begin{conjecture}[Nakanishi]\cite{Kir}
Every knot can be reduced to the trivial knot by $4$-moves.
\end{conjecture}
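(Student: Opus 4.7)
The plan is to mirror the 3-move strategy of this paper with the fourth Burnside group $\mathcal{B}_4$ in place of $\mathcal{B}_3$. Define $\mathcal{B}_4(L)$ as the quotient of the reduced core group $\text{Cor}^{red}(D)$ by the normal subgroup generated by all fourth powers; invariance under 4-moves is verified diagrammatically exactly as in Figure \ref{Burnside-n-move} with $n=4$, since the crossing pair $(a,b)$ is sent to $((ba^{-1})^4 a, (ba^{-1})^4 b)$, which coincides with $(a,b)$ modulo the added relations. Classical Burnside theory guarantees that $B(r,4)$ is finite for every $r$, so $\mathcal{B}_4(L)$ remains a computable invariant, though with order much larger than in the $n=3$ case.

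First I would develop 4-move analogues of Lemma \ref{redrot} and Corollary \ref{Coro:reducingandmove}, using the braid identity $\sigma_i^{-1} \equiv \sigma_i^{3} \pmod{\sigma_i^{4}=1}$ in place of $\sigma_i^{-1} \equiv \sigma_i^{2}$, and re-running the algebraic manipulations on 2-triangle, 3-triangle and 4-tangle subconfigurations to identify those that reduce crossing number. The resulting catalogue of forbidden subgraphs would then feed into the inductive computational scheme of Section \ref{section:computation}: enumerate 4-edge-connected simple quartic plane graphs via plantri, discard those containing a reducing subgraph, decorate the surviving vertices with crossings (restricting to those producing a single knot component rather than a link) to obtain candidate knot diagrams, convert each to a braid word via Vogel-Traczyk, and separate any candidate survivors from the unknot using $\mathcal{B}_4$ together with the Jones polynomial and other invariants.

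The principal obstacle is structural rather than computational. The 3-move proof ultimately rests on Coxeter's theorem that $C_5 = B_5/(\sigma_1^3)$ is finite with $102$ conjugacy classes, together with Chen's theorem that every closed 5-braid is 3-move equivalent to either a trivial link or the Chen link. No analogous finiteness theorem is presently known for $B_k/(\sigma_1^4)$ with $k \geq 3$, so the braid-theoretic step that closes the 3-move argument has no direct counterpart at $n=4$. Compounding this, the conjecture restricts to knots only, so one cannot hope for a D\c{a}bkowski--Przytycki style counterexample via Burnside groups: indeed, no invariant is presently known to separate any knot from the unknot modulo 4-moves, which is precisely why the conjecture remains open. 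A complete proof would therefore require either a genuine finiteness result for quotients of braid groups by $\sigma_1^4$, or a combinatorial reduction argument tailored to knots that bypasses the braid-conjugacy step entirely; the method developed in this paper, as stated, is likely to resolve the conjecture only for knots up to some bounded crossing number, extending the range crossing by crossing as the polyhedron enumeration and the $\mathcal{B}_4$ computations become tractable.
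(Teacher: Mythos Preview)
The statement you are addressing is not a theorem but an \emph{open conjecture}: the paper does not prove it, and indeed cites it in Section~\ref{Future} precisely as a future direction, noting only that it has been verified up to 11 crossings and that the 17-crossing $(2,1)$-cable of the figure-eight knot is a suspected counterexample. There is therefore no ``paper's own proof'' to compare against.

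Your proposal is not a proof either, and you acknowledge this yourself in the final paragraph: you correctly identify that no finiteness result for $B_k/(\sigma_1^4)$ is available, that no invariant currently separates any knot from the unknot modulo 4-moves, and that the method would at best push the verified crossing range upward rather than settle the conjecture. What you have written is a reasonable research outline for a partial verification in the spirit of the paper's 3-move work, and it aligns with the paper's own remark that ``our method has the potential to give a positive answer to the conjecture for knots up to 16 crossings, if not more.'' But it should be labelled as such---a strategy for extending the known range---not as a proof proposal for the conjecture itself.
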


The conjecture was proven for knots up to 11 crossings 
and the case of 12 crossings is discussed in \cite{P-Jab}. Together with Askitas, the seventh author guessed that the conjecture does not hold for the $2$-cable of the figure-eight knot with a half-twist (a $17$ crossing knot). Our method has the potential to give a positive answer to the conjecture for knots up to 16 crossings, if not more.

\section*{Acknowledgements}
This project started at MATRIX, a research institute in Creswick, Australia affiliated to the University of Melbourne, during the program Low Dimensional Topology: Invariants of Links, Homology Theories, and Complexity in June 2024. The first author was supported by Dr. Max Rössler, the Walter Haefner Foundation, and the ETH Zürich Foundation while she was a Fellow at the Institute for Theoretical Studies at ETH Zürich. The fourth author acknowledges the support by the Australian Research Council grant DP240102350. The fifth author was supported by the Matrix-Simons travel grant and acknowledges the support of the National Science Foundation through Grant DMS-2212736. The sixth author was supported by the Matrix-Simons travel grant and is grateful to P. Vojt\v{e}chovsk\'{y} for his support through the Simons Foundation Mathematics and Physical Sciences Collaboration Grant for Mathematicians no. 855097. The seventh author was partially supported by the Simons Collaboration Grant 637794.

\end{document}